\newtheorem{theorem}{Theorem} 
\newtheorem{lemma}[theorem]{Lemma}
\newtheorem{proposition}[theorem]{Proposition}
\newtheorem{corollary}[theorem]{Corollary}
\newcommand{\bN}{\mathbb{N}}
\newcommand{\bR}{\mathbb{R}}
\newcommand{\bH}{\mathbb{H}}
\newcommand{\bV}{\mathbb{\hspace{-.07mm}V\hspace{-.2mm}}}
\newcommand{\cF}{\mathcal{F}}
\newcommand{\cH}{\mathcal{H}}
\newcommand{\cL}{\mathcal{L}}
\newcommand{\cV}{\mathcal{V}}
\newcommand{\unif}{\text{\rm unif}}
\newcommand{\var}{\text{\rm var}}
\newcommand{\Beta}{\text{\rm Beta}}
\newcommand{\GEM}{\text{\rm GEM}}
\newcommand{\dequ}{=_{\text{\rm\tiny d}}}
\newcommand{\TPL}{\text{\rm TPL}}
\newcommand{\HPL}{\text{\rm HPL}}
\newcommand{\WI}{\text{\rm WI}}
\newcommand{\RT}{\text{\rm RT}} 
\begin{document}

\title[Random recursive trees]{Random recursive trees: A boundary theory approach}

\author{Rudolf Gr\"ubel}
\address{Institut f\"ur Mathematische Stochastik\\ 
         Leibniz Universit\"at Hannover\\
         Postfach 6009\\
         30060 Hannover\\
         Germany}
\email{rgrubel@stochastik.uni-hannover.de}

\author{Igor Michailow}
\curraddr{}
\email{michail@stochastik.uni-hannover.de}

\subjclass[2000]{Primary 60C05, secondary 05C05, 60J50, 68Q87}
\keywords{Doob-Martin compactification, Markov chains, path length, random trees, Harris trees, Wiener index}
\date{\today}

\begin{abstract} 
  We show that an algorithmic construction of sequences of recursive trees leads to a direct proof of the convergence of
  random recursive trees in an associated  Doob-Martin compactification; it also gives a representation of the limit in terms of
  the input sequence of the algorithm. We further show that this approach can be used to obtain strong limit theorems
  for various tree functionals, such as path length or the Wiener index.
\end{abstract}

\maketitle

\section{Introduction}\label{sec:intro}
A tree with node set $[n]:=\{1,\ldots,n\}$ is recursive if the node numbers along the unique path from $1$ to $j$
increase for $j =2,\ldots,n$. Trees with this property may be encoded by a sequence $(j_1,\ldots,j_{n-1})$, where
$j_{k}\in [k]$ denotes the direct ancestor of $k+1$ (next node on the way to the `root' $1$). Such a sequence also gives
a recipe for growing the corresponding tree: Starting with the unique recursive tree of size (number of nodes) $1$,
which consists of the root node $1$ only, we obtain the respective next tree by joining node $k$ to node $j_{k-1}$,
$k=2,\ldots,n$.  Choosing the ancestor of the next node uniformly at random among the nodes of the current tree we obtain 
a sequence $Y_1,Y_2,\ldots$ of random recursive trees, which we collect into a stochastic process $Y=(Y_n)_{n\in\bN}$.

A survey of random recursive trees and their applications is given in~\cite{SmMah}; for a more recent reference
see~\cite[Chapter~6]{Drmota09}. Various functionals of these structures have been considered by different authors, a
representative but not exhaustive list being node degrees~\cite{RRTdegree,RRTdegr2,RRTdegr3}, height~\cite{RRTheight},
path length~\cite{Mah1991,RRTpathlength}, profiles~\cite{RRTprofile}, spectra~\cite{RRTspec}, and various `topological' indices,
such as the Wiener and the Zagreb indices~\cite{RRTWiener,RRTZagreb}. Often the results are limit theorems, with
(strong) convergence of the random variables or convergence of their distributions as $n\to\infty$. This, in the authors'
view, naturally raises the question of convergence of the trees themselves, with the aim of developing a systematic
approach to the strong asymptotics of tree functionals. The Doob-Martin compactification, initiated by the fundamental 
paper~\cite{Doob59}, is a general tool that can be used in this context; see~\cite{Woess2} for a recent textbook
introduction. In particular, using concepts from discrete potential theory it provides an enlargement of the state space 
of a Markov chain such that the variables converge almost surely. This approach has been used in ~\cite{EGW1} to obtain
convergence results for a class of randomly growing discrete structures that includes various random trees.

If we use the encoding explained in the opening paragraph then it is possible to retrace the full sequence $Y_1,\ldots,Y_{n-1}$ of
previous trees from the current tree $Y_n$. In such a case the discrete potential theory approach leads to convergence
in the sense of projective (or inverse) limits, which is of little help for proving convergence of functionals. Noting
that the functionals of interest are often invariant under relabelling (a phrase that has to be made precise) we
therefore choose a model that is coarser in the sense that it `forgets the labels' but retains the Markov property.
This partial loss of information turns the sequence $Y$ into a sequence $X=(X_n)_{n\in\bN}$ of randomly growing subsets
of a fixed infinite tree. For this chain, the Doob-Martin compactification has been determined in~\cite{EGW1}.  The
first of our aims here is to show that the convergence result provided by the general theory can be obtained more
directly by using a suitable algorithmic construction, and that this approach has the advantage of leading to a
description of the limit $X_\infty$ in terms of the input sequence of the algorithm. The
representation serves as the basis for the analysis of tree functionals such as different notions of path length and the
Wiener index; indeed, our second objective is to obtain strong limit theorems for such functionals. A similar strategy
has been used in~\cite{GrMtree} for binary search trees.

In the next section we first take care of a variety formal details, including some terminology and notation, and
then give a new `constructive' proof of the basic limit result. In Section~\ref{sec:func} we discuss various
tree functionals and comment on the connections to related work. 

\section{The limit tree and its distribution}\label{sec:limit}
We introduce Harris trees and the Harris chain generated by the RRT process; in view of its confounding
potential we spell out the details of the transition from recursive to Harris trees. From the RRT sequence
Harris chains inherit a useful decomposition property. Next, we recall from~\cite{EGW1} the Doob-Martin
compactification of the Harris chain. Then we explain an algorithm which is then used to give a new proof of
that part of~\cite[Theorem~6.1]{EGW1} that is relevant for our present purposes, together with a
representation of the limit. Finally, we collect some auxiliary results on the distribution of the limit that will be useful in the next
section when we analyze tree functionals.

\subsection{From recursive trees to Harris trees}\label{subsec:rec2Harris} 
We regard the set $\bV=\bN^\star$ of finite sequences of natural numbers as the set of potential tree nodes and write
$u+v=(u_1,\ldots,u_k,v_1,\ldots,v_l)$ for the concatenation of the nodes $u=(u_1,\ldots,u_k)$ and $v=(v_1,\ldots,v_l)$,
abbreviating $u+(i)$ to $ui$, $i\in\bN$. By a \emph{Harris tree} we mean a finite
subset $x$ of $\bV$ with the properties
\begin{itemize}
\item[(H1)] if $u+v\in x$, then $u\in x$,
\item[(H2)] if $ui\in x$ with $i>1$, then $uj\in x$ for $j=1,\ldots,i-1$.
\end{itemize}
Condition (H1) is prefix stability if we regard nodes as words with letters from the alphabet $\bN$. In a family tree
interpretation, condition (H2) means that a non-root node must either be the first child of its ancestor node or that it
must have earlier-born siblings. Harris trees are also known as Ulam-Harris trees; they may be seen as rooted planar
trees with a specific labelling of nodes.

We write $\bH$ for the set of Harris trees and $\bH_n$ for the subset of those trees that have $n$ nodes. In order to
relate Harris trees to recursive trees we map the nodes $j$ of a recursive tree to words $u(j)=(u_1(j),\ldots,u_k(j))\in\bV$
as follows: The length $k$ of the word is the distance to the root of (the node labelled) $j$, and $u_k(j)$ is the number of nodes
$i\in [j]$ that have the same direct ancestor as $j$.  The prefix sequences similarly encode the nodes from the root to
$j$. This corresponds to an embedding of recursive trees into the plane where new nodes are placed to the right of their
siblings.  

\begin{figure}
  \begin{center}
  \phantom{a}\vspace{.5cm}
  \includegraphics[width=12cm]{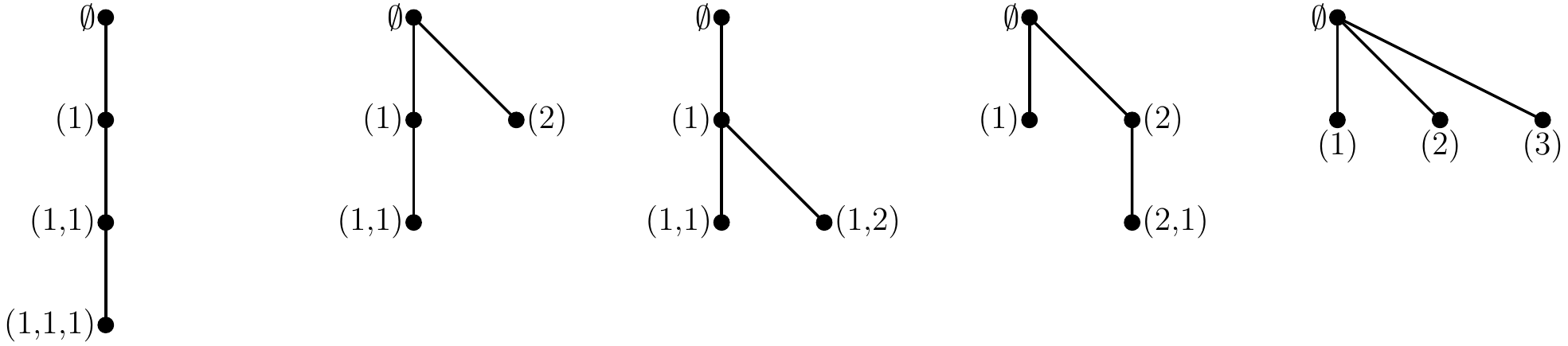}
 \end{center}
\vspace{-.3cm}
\caption{The Harris trees with four nodes.}\label{fig:fournodes}
\end{figure}

Clearly, there are $(n-1)!$ possibilities for the encoding sequences for recursive trees with $n$ nodes, 
hence this is also the number of recursive trees with $n$ nodes.  
Figure~\ref{fig:fournodes} shows the five elements of $\bH_4$. Of the $(4-1)!=6$ recursive trees with four
nodes, encoded by $(1,2,3)$, $(1,1,2)$, $(1,2,1)$, $(1,2,2)$, $(1,1,3)$ and $(1,1,1)$ respectively, the second and  third are
mapped to the same Harris tree. The figure also offers an opportunity to comment on the informal expression of
`forgetting the labels' that we used above and that often appears in the literature: It is tempting to regard this as
passing from graphs to isomorphism classes, but this is not what is happening here---indeed, the second and the fourth
Harris tree in Figure~\ref{fig:fournodes} are isomorphic as rooted trees. A compatible notion of equivalence and
isomorphism in the present situation can be obtained on the basis of the above planar embedding of recursive
trees. 

Writing $\Psi$ for the function that maps recursive trees to Harris trees, we define $X=(X_n)_{n\in\bN}$ by
$X_n:=\Psi(Y_n)$ for all $n\in\bN$, where $Y$ is the RRT chain introduced in Section~\ref{sec:intro}. In the original
process, $Y_n$ is uniformly distributed on its range, but $X_n$ is not uniformly distributed on $\bH_n$ as explained
above for $n=4$.  In this new process, it is no longer possible to `trace back' to previous values. As $\Psi$ does
not change the number of nodes it is adapted to the combinatorial family $\bH$ in the sense that $P(X_n\in\bH_n)=1$ for
all $n\in\bN$. To see that it retains the Markov property and to obtain the corresponding transition probabilities we
argue as follows: Let $y_n,y_n'$ be recursive trees with $n$ nodes and let $y_{n+1}$ be a recursive tree with $n+1$
nodes. Suppose that $\Psi(y_n)=\Psi(y_n')=:x_n$ and let $x_{n+1}:=\Psi(y_{n+1})$. If $x_n\subset x_{n+1}$ then there is
a unique recursive tree $z_{n+1}$ such that $\Psi(z_{n+1})=x_{n+1}$ and
\begin{equation*}
  P(X_{n+1}=x_{n+1}|Y_n=y_n)\, = \, P(Y_{n+1}=z_{n+1}|Y_n=y_n)\, = \, \frac{1}{n}, 
\end{equation*}
and similarly there is a $z_{n+1}'$ with the same property for $y_n'$. Clearly, if $x_n\not\subset x_{n+1}$, then
these probabilities will be 0.  This shows that
\begin{equation*}
    P(X_{n+1}=x_{n+1}|Y_n=y_n)\, = \,   P(X_{n+1}=x_{n+1}|Y_n=y_n')
\end{equation*}
whenever $\Psi(y_n)=\Psi(y_n')$, and further that
\begin{equation*}
    P(X_{n+1}=x_{n+1}|X_n=x_n)\, = \,
             \begin{cases}  1/n, & x_n\subset x_{n+1},\\ 0, &\text{otherwise}, \end{cases}
\end{equation*}
for $x_n\in\bH_n$, $x_{n+1}\in\bH_{n+1}$. By~\cite[Lemma~2.5]{LPW} the first of these implies that $X$ is a Markov
chain; the second shows that, as with $Y$, we select the ancestor for the new node uniformly at random in the step
from $X_n$ to $X_{n+1}$. 

\subsection{A tree decomposition}\label{subsec:treedecomp}
We associate with a node $u=(u_1,\ldots,u_k)\in\bV$ its `flat' and `raised' version
\begin{equation*}
 u^\flat :=\; (1,u_1,\ldots,u_k), \quad 
      u^\sharp := \begin{cases} (1+u_1,u_2,\ldots,u_k)&\text{if } u\not=\emptyset,\\ 
                                            \quad\emptyset,&\text{if } u=\emptyset, 
                       \end{cases}
\end{equation*}
and lift this to trees $x\in\bH$ via
\begin{equation*}
  x^\flat := \{ u\in\bV:\, u^\flat \in x\},\quad x^\sharp :=\; \{ u\in\bV:\, u^\sharp \in x\}.
\end{equation*}
These are the subtree of $x$ rooted at $(1)$ and the shifted tree that remains if this subtree is taken out.
It is well known that the random variables $K_n:=\#X_n^\flat$, $X_n^\flat$ and $X_n^\sharp$ are independent,
with $K_n$ uniformly distributed on $[n-1]$, and that, conditionally on $K_n=k$, $X_n^\flat$ and $X_n^\sharp$ have
the same distribution as $X_k$ and $X_{n-k}$ respectively. An interesting combinatorial proof of the
corresponding statement for the $Y$ process, based on a bijection between permutations and random recursive
trees, is given in~\cite{RRTpathlength}. An alternative proof can be obtained on using the algorithmic
background to be given in Section~\ref{subsec:alg} below.

\subsection{The Doob-Martin compactification of the Harris chain}\label{subsec:DMHarris} 
The paths of the stochastic process $X$ are sequences of growing subset of the set $\bV$ of all potential
nodes. We may regard $\bV$ itself as the infinite Harris tree (note that this tree is not locally finite). It
can be shown that, in the $X$-sequence, every potential node will eventually be an element of the infinite 
Harris tree. Hence,
if we embed $\bH$ into $\{0,1\}^\bV$ via the node indicators,
\begin{equation*}
  x\,\mapsto\, \bigl(u\mapsto 1_x(u)\bigr),
\end{equation*}
then $X_n$ converges almost surely to this infinite tree, which is represented by the function that is constant~1. This, however,
does not capture the `true' asymptotics of~$X$. In contrast, Markov chain boundary theory
provides a state space completion (compactification) $\bar \bH$ of $\bH$ with the properties
\begin{itemize}
\item[(L)] $X_n\to X_\infty\in\partial\bH:=\bar\bH\setminus\bH$ with probability~1 as $n\to\infty$,
\item[(T)] $X_\infty$ generates the tail $\sigma$-field associated with $X$, up to null sets. 
\end{itemize}
For (T), we require that the chain has the space-time property, meaning that the time parameter $n$ is a function of the
state $x$.  For the Harris  sequence this is the case, so (T) implies that the Doob-Martin compactification captures the
persisting randomness of the sequence, whereas for any one-point compactification the $\sigma$-field generated by the
limit will always be trivial in the sense that only 0 and 1 arise as probabilities of its elements.

The Doob-Martin compactification $\bar \bH$ of $\bH$ with respect to the Harris chain has been identified
in~\cite{EGW1}. Let
\begin{equation*}
        \bar\bV\; :=\;  \bN^\star \sqcup \bN^\infty\,\sqcup\, \bigsqcup_{k=0}^\infty \bN^k\times \{\infty\}^\infty.
\end{equation*}
In words: $\bar\bV$ consists of all finite and infinite sequences of natural numbers, plus all infinite sequences
$u=(u_i)_{i\in\bN} \subset \bN\sqcup\{\infty\}$ with the property that, for some $k\in\bN$, $u_i\in\bN$ for $i<k$
and $u_i=\infty$ for $i\ge k$. For $u\in\bV$ and $v\in\bar\bV$ write $u\le v$ if $u$ is a prefix of $v$ and put
\begin{equation*}
  A_u:=\{v\in\bar\bV:\, u\le v\}, \quad u\in\bV.
\end{equation*}
Let $\cV$ be the $\sigma$-field on $\bar\bV$ generated by the sets $A_u$, $u\in\bV$, let $\bar\bH$ be the set of 
probability measures $\mu$ on $(\bV,\cV)$, and endow $\bar\bH$ with the coarsest topology that makes the 
functions $\mu\mapsto \mu(A_u)$, $u\in\bV$, continuous. Finally, embed $\bH$ into $\bar\bH$ by identifying
$x\in\bH$ with the uniform distribution on $x$ as a subset of $\bV$.
Then $\bar\bH$ is the Doob-Martin compactification of $\bH$ induced by the chain $X$, up to homeomorphism. 

\subsection{The algorithmic construction}\label{subsec:alg}
For $x\in\bH$ and $u\in\bV$ let $x(u):=\{v\in\bV:\, u+v\in x\}$ be the subtree of $x$ rooted at $u$. Then the embedding
of $\bH$ into $\bar\bH$ may be written as
\begin{equation*}
  x \, \mapsto\, \bigl(u \mapsto \# x(u)/\#x\bigr),  
\end{equation*}
and we can restate the
convergence in the Doob-Martin topology of a sequence $(x_n)_{n \in\bN}$ with $x_n\in\bH_n$ for all $n\in\bN$  
to $\mu\in \partial\bH$  as
\begin{equation*}
  \lim_{n\to\infty} \frac{1}{n}\, \# x_n(u) = \mu(A_u)\quad\text{for all } u\in\bV.
\end{equation*}
Our plan is to prove the almost sure convergence of the Harris chain $X$ in this topology by using an algorithm that
generates $X$ if the input is chosen appropriately. 

The \emph{recursive tree algorithm} maps an input sequence $t=(t_n)_{n\in\bN}$ of pairwise distinct positive real
numbers to an output sequence $(x_n,\phi_n)_{n\in\bN}$ of labelled trees, with $x_n\in\bH_n$ and $\phi_n:x_n\to\bR$. The
algorithm works sequentially, starting with $x_1=\{\emptyset\}$ and the label $\phi_1(\emptyset)=t_0:=0$ for the root
node. As explained in Section~\ref{sec:intro} and at the end of Section~\ref{subsec:rec2Harris}, we need to specify the
direct ancestor of the new node $v$ to be added in the step from $x_n$ to $x_{n+1}$: We attach $v$ as a next (resp.~the
first) child to the node with label $\max\{t_j:\, j=0,\ldots,n-1, \, t_j<t_n\}$ and then label $v$ by $t_n$.
Figure~\ref{fig:RRT} shows an example where new children are positioned to the right of their older siblings. By
$\RT(t)$ we mean the sequence $(x_n)_{n\in\bN}$, i.e.~we ignore the labels.

\begin{figure}
  \begin{center}
  \includegraphics[width=5cm]{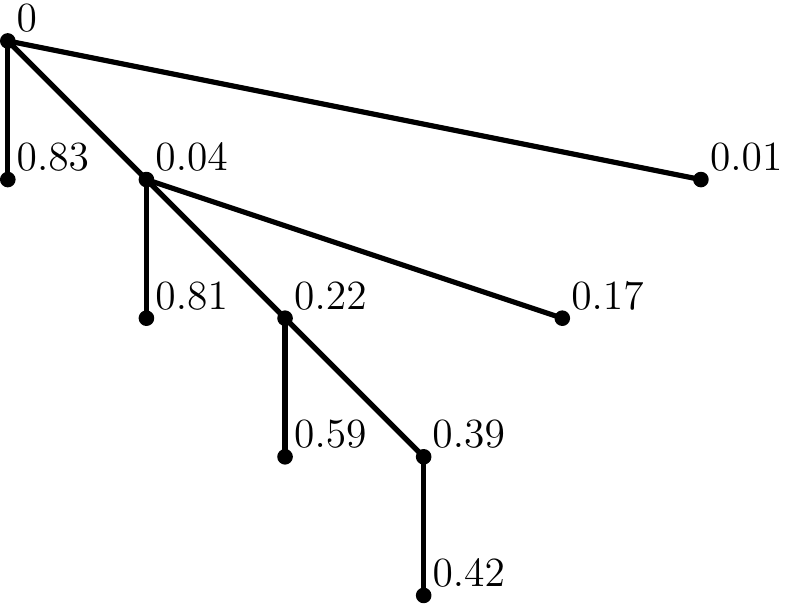}
 \end{center}
\caption{The tree obtained from $\ t=(.83,.04,.81,.22,.59,.01,.39,.42,.17,\ldots)$.}\label{fig:RRT}
\end{figure}

Clearly, if the trees converge, then the limit must be a function of the input sequence. In order to be able to specify
this relationship we need some more notation: Given an increasing sequence $(x_n)_{n\in\bN}$ of Harris trees, let
\begin{equation*}
  \tau(u) \, :=\,  \inf\{n\in\bN:\, u\in x_{n+1}\} 
\end{equation*}
(note that $x_n$ is built from $t_1,\ldots, t_{n-1}$). 
Further, for any sequence $(t_n)_{n\in\bN}$ of pairwise distinct elements of the open unit interval let 
\begin{equation*}
  0=:t_{(n:0)}< t_{(n:1)}<t_{(n:2)}<\cdots<t_{(n:n)}<t_{(n:n+1)}:=1
\end{equation*}
be the (augmented) increasing order statistics associated with the first $n$ values $t_1,\ldots,t_n$, and let
\begin{equation*}
    \kappa(u) :=\#\{1\le i\le \tau(u): \, t_i\le t_{\tau(u)}\}  
\end{equation*}
be the rank of $t_{\tau(u)}$ in $t_1,\ldots,t_{\tau(u)}$, so that $t_{(\tau(u):\kappa(u))}=t_{\tau(u)}$. In
Figure~\ref{fig:RRT} for example, the node $u=(2,2)$ has $\tau(u)=4$, $\kappa(u)=2$ and $t_{\tau(u)}=0.22$.

The following result relates the algorithm and the limit object. Let $\unif(0,1)$ be the uniform
distribution on the unit interval. We write $\cL(Y)$ for the distribution (law) of the random quantity $Y$ and sometimes
use $Y\sim \mu$ instead of $\cL(Y)=\mu$.

\begin{theorem}\label{thm:lim1} 
Let $\eta_i$, $i\in\bN$, be independent random variables, with $\eta_i\sim\unif(0,1)$ for  all $i\in\bN$. 

\emph{(a)} The algorithm $\RT$ generates the RRT chain in the sense that $\RT(\eta)$ and $X$ are identical in distribution. 

\smallbreak
\emph{(b)} Suppose that $X=\RT(\eta)$. Then 
$X_n$ converges almost surely to $X_\infty$ in the Doob-Martin topology as
$n\to\infty$, where  on a set of probability~1 the limit $X_\infty$ is given by
\begin{equation}\label{eq:Xinfty}
  X_\infty(A_u) = \eta_{(\tau(u):\kappa(u)+1)} -\eta_{\tau(u)} \quad \text{for all }u\in\bV.
\end{equation}
\end{theorem}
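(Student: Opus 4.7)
The plan is, for part (a), to invoke the classical fact that for an i.i.d.\ continuous sequence the relative ranks $R_n := \#\{1\le i\le n:\eta_i\le\eta_n\}$ are independent with $R_n$ uniformly distributed on $\{1,\ldots,n\}$. Two observations connect this to the Harris chain. First, the algorithm $\RT$ depends on the input only through these ranks, since the parent-selection rule $\max\{t_j:j<n,\,t_j<t_n\}$ is order-invariant. Second, at the step $n \to n+1$ of $\RT$ the ancestor of the new node is a bijective function of $R_n$: if $R_n=k$, then the ancestor is the node whose label has rank $k-1$ in $\{0,\eta_1,\ldots,\eta_{n-1}\}$. Combining these, conditionally on $\RT(\eta)_n$ the new ancestor is uniform on the $n$ existing nodes, which matches the transition probabilities for $X$ derived in Section~\ref{subsec:rec2Harris}.

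For part (b), the heart of the argument is a combinatorial description of the subtree rooted at $u$. Set $\alpha := \eta_{\tau(u)}$ and let $R(u)$ denote the smallest element of $\{0,\eta_1,\ldots,\eta_{\tau(u)-1}\}$ strictly greater than $\alpha$, with $R(u):=1$ if no such element exists. I claim that for every $n \ge \tau(u)$,
\begin{equation*}
  X_n(u) \;=\; \{u\}\cup\bigl\{v:\tau(v)\le n,\;\eta_{\tau(v)}\in(\alpha,R(u))\bigr\}.
\end{equation*}
The proof is by induction on the insertion time. The inclusion ``$\supseteq$'' follows because when a new label $t_j\in(\alpha,R(u))$ is added, no previously inserted label lies in $[R(u),t_j)$, so the parent is either $u$ itself or a previously inserted label in $(\alpha,t_j)$, which by induction belongs to $X_{j-1}(u)$. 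The inclusion ``$\subseteq$'' uses the fact that any later label $t_j\ge R(u)$ has a parent with label at least $R(u)$, because $R(u)$ was already present at time $\tau(u)$ and is therefore a valid candidate; this forces $t_j$ and all its descendants to lie outside the subtree of $u$.

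Granted the claim, the rest is routine. Conditionally on $\eta_1,\ldots,\eta_{\tau(u)}$ the values $\eta_j$ with $j>\tau(u)$ are i.i.d.\ $\unif(0,1)$, so the strong law of large numbers yields
\begin{equation*}
  \frac{1}{n}\,\#X_n(u) \;\longrightarrow\; R(u)-\alpha \qquad\text{almost surely.}
\end{equation*}
Because $\eta_{\tau(u)}=\eta_{(\tau(u):\kappa(u))}$ by the very definition of $\kappa(u)$, the quantity $R(u)$ is the next augmented order statistic $\eta_{(\tau(u):\kappa(u)+1)}$, yielding the formula in~\eqref{eq:Xinfty}. Since $\bV$ is countable, the exceptional null set can be chosen independent of $u$, and the characterisation of Doob-Martin convergence recalled at the start of Section~\ref{subsec:alg} then gives $X_n\to X_\infty$ in the Doob-Martin topology.

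The main obstacle I expect is the ``$\subseteq$'' direction of the subtree identity: one must verify that once a label above $R(u)$ has entered the tree, it is permanently separated from the subtree of $u$, even though many later-inserted labels below $R(u)$ continue to attach to $u$ or to its descendants. This requires tracking how the immediate-predecessor relation evolves over time, and in particular showing that for every later label $t_j\ge R(u)$ the true parent is $R(u)$ or a descendant thereof, never $\alpha$ or an element of $(\alpha,R(u))$.
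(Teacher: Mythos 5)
Your argument follows the paper's route exactly: part (a) via the exchangeability/uniformity of sequential ranks, and part (b) by associating with each node $u$ the interval $I(u)=(\eta_{\tau(u)},\eta_{(\tau(u):\kappa(u)+1)})$ — your $(\alpha,R(u))$ is this same interval — showing that a node inserted after time $\tau(u)$ lands in the subtree of $u$ iff its label falls in $I(u)$, and then applying the strong law (the paper invokes Glivenko–Cantelli for the same purpose). The paper merely asserts the subtree characterisation, while you correctly note it needs an induction on insertion time and sketch one; that is the right level of detail. Two small slips in the write-up: in the $\supseteq$ step the interval $[R(u),t_j)$ you cite is empty (since $t_j<R(u)$) and does no work — the operative fact is that $\alpha=t_{\tau(u)}$ is always a candidate parent, so the max defining the parent lies in $[\alpha,t_j)\subset[\alpha,R(u))$ and induction applies; and the $\subseteq$ step must also dispose of labels $t_j<\alpha$ (their ancestral chain stays strictly below $\alpha$, hence cannot hit $u$), not only labels $\ge R(u)$. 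Finally, for $t_j>R(u)$ the parent need not be $R(u)$ itself or a descendant of it, contrary to what you anticipate; what the induction actually uses is only that the parent's label is $\ge R(u)$, hence by the induction hypothesis outside $\{u\}\cup X_{j-1}(u)$.
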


\begin{proof} 
Part (a) belongs to the folklore of the subject.  Due to its
importance for the present paper we recall for the proof that the rank of $\eta_n$ in $\eta_1,\ldots,\eta_n$ is
uniformly distributed on $\{1,\ldots,n\}$, and that rank $i$ means that $\eta_n$ is attached to the
node with label $\eta_{(n:i-1)} $ (which is the root if $i=1$).

With each node $u$ we associate the interval $I(u)=(\eta_{\tau(u)},\eta_{(\tau(u):\kappa(u)+1)})$. From the definition of
the RT algorithm, nodes added to the tree at a time $n>\tau(u)$ will have prefix $u$ if and only if $\eta_n\in I(u)$. 
The random variables $\eta_{\tau(u)+n}$, $n\in\bN$, are independent and uniformly distributed on the unit interval,
hence~\eqref{eq:Xinfty} follows with the Glivenko-Cantelli theorem. 
\end{proof} 

Theorem~\ref{thm:lim1} can be related to the corresponding result~\cite[Theorem~1]{GrMtree} for binary search trees via the natural or rotation
correspondence between Harris trees and binary trees~\cite[Section 2.3.2]{Knuth1}~\cite[p.73]{FlSedge}; details are given in~\cite{IMdiss}.

In addition to the convergence of the trees we also obtain the distribution of the limit $X_\infty$, which takes its values in 
the set of probability measures $\mu$ on $(\bar\bV,\cV)$. As a preliminary step we extend the tree decomposition
introduced in Section~\ref{subsec:treedecomp} to $\bar\bH$ as follows: For $\mu\in\partial\bH$
with $0<\mu(A_{(1)})<1$ we define $\mu^\flat,\mu^\sharp\in\partial \bH$ by 
\begin{equation*}
   \mu^\flat(A_u) = \frac{\mu(A_{u^\flat})}{\mu(A_{(1)})}, \ \ 
   \mu^\sharp(A_u)=\frac{\mu(A_{u^\sharp})}{1-\mu(A_{(1)})}
\end{equation*}
for all $u\in\bV\setminus\{\emptyset\}$, and $\mu^\flat (A_u)=\mu^\sharp (A_u)=1$ if $u=\emptyset$. 

\begin{proposition}\label{prop:decompinfty}
Let $X_\infty$ be as in Theorem~\ref{thm:lim1}. Then the random variables $\eta:=X_\infty(A_{(1)})$,
$X_\infty^\sharp$ and $X_\infty^\flat$ are independent. Further, $\eta\sim\unif(0,1)$,
and $X_\infty^\sharp$ and $X_\infty^\flat$ have the same distribution as $X_\infty$.
\end{proposition}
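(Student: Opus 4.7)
The plan is to read the proposition off Theorem~\ref{thm:lim1} by combining the explicit formula~\eqref{eq:Xinfty} for $X_\infty$ with a standard splitting of the iid uniform input sequence around its first entry $\eta_1$. As a short first step, because $\tau((1))=1$ and $\kappa((1))=1$, formula~\eqref{eq:Xinfty} gives $\eta = X_\infty(A_{(1)}) = \eta_{(1:2)} - \eta_1 = 1-\eta_1$, which is $\unif(0,1)$.

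Next I would split the input. Let $(\eta_{m_i})_{i\in\bN}$ and $(\eta_{n_i})_{i\in\bN}$ enumerate, in increasing order of index, the subsequences of $(\eta_n)_{n\geq 2}$ lying above and below $\eta_1$ respectively, and rescale them to
\begin{equation*}
 \eta'_i := (\eta_{m_i}-\eta_1)/(1-\eta_1),\qquad \eta''_i := \eta_{n_i}/\eta_1.
\end{equation*}
A classical restriction/thinning argument for iid uniforms shows that $\eta_1$, $(\eta'_i)_{i\in\bN}$, and $(\eta''_i)_{i\in\bN}$ are mutually independent, with each rescaled subsequence being iid $\unif(0,1)$. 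The aim is then to show almost surely that $X_\infty^\flat = \RT(\eta')_\infty$ and $X_\infty^\sharp = \RT(\eta'')_\infty$; once established, the proposition follows immediately, because $\eta$ is determined by $\eta_1$, Theorem~\ref{thm:lim1}(b) gives the distribution of each $\RT$-limit, and measurable functions of independent inputs are independent.

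The identification of the two halves is the technical heart. Structurally it should hold because in the RT algorithm the parent of a new node depends only on labels strictly smaller than its own, so labels from the opposite side of $\eta_1$ are invisible: the subtree of $X$ rooted at $(1)$ is built step by step from $(\eta_{m_i})$ and is order-isomorphic, via the affine bijection $t\mapsto \eta_1+(1-\eta_1)t$, to the tree produced by $\RT(\eta')$. To turn this into a check of~\eqref{eq:Xinfty} I would verify that, writing $\tilde\tau,\tilde\kappa$ for the Theorem~\ref{thm:lim1} quantities computed on $(\eta'_i)$, one has $\tau(u^\flat)=m_{\tilde\tau(u)}$ and that the same affine map carries the ranks and augmented order statistics of $\eta_1,\ldots,\eta_{\tau(u^\flat)}$ appearing in~\eqref{eq:Xinfty} onto those of $\eta'_1,\ldots,\eta'_{\tilde\tau(u)}$; the $\sharp$-case is symmetric with $\eta_1$ in place of $1-\eta_1$.

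The main obstacle I foresee is precisely this bookkeeping: carefully handling the augmented endpoint~$1$ whenever the new $\flat$-node creates a new maximum in $(\eta_1,1)$, and matching ranks between the original and the restricted sequences. Everything else should be routine.
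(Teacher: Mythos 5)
Your proposal matches the paper's proof essentially step for step: you split the iid uniform input at $\eta_1$, rescale the two halves to get fresh iid uniform sequences (your $\eta',\eta''$ are the paper's $\eta^\flat,\eta^\sharp$), observe the mutual independence of $\eta_1$ and the two rescaled sequences, and then identify $X_\infty^\flat$ and $X_\infty^\sharp$ as the $\RT$-limits of those sequences. The only stylistic difference is that you propose to verify the identification through the explicit formula~\eqref{eq:Xinfty} and its $\tau,\kappa$ bookkeeping, whereas the paper asserts the cleaner process-level identities $X^\flat=\RT(\eta^\flat)$, $X^\sharp=\RT(\eta^\sharp)$ directly (which follow because in the $\RT$ dynamics a new node with label on one side of $\eta_1$ attaches within the corresponding subtree according to the relative order of labels on that side only) and then passes to the limit by Theorem~\ref{thm:lim1}(b); this sidesteps most of the index-matching you flag as the obstacle.
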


\begin{proof}
Let $\eta=(\eta_i)_{i\in\bN}$ be a sequence of independent, $\unif(0,1)$-distributed random variables. We define
two new sequences $\eta^\flat=(\eta^\flat_i)_{i\in\bN}$ and $\eta^\sharp=(\eta^\sharp_i)_{i\in\bN}$ by successively
transforming  the $\eta_i$'s with $\eta_i>\eta_1$ into $\eta_j^\flat=(\eta_i-\eta_1)/(1-\eta_1)$ and the $\eta_i$'s
with $\eta_i<\eta_1$ into $\eta^\sharp_j=\eta_i/\eta_1$. Clearly, $\eta_1$, $\eta^\flat$ and $\eta^\sharp$ are 
independent, and $\eta^\flat$ and $\eta^\sharp$ are again sequences of independent, $\unif(0,1)$-distributed 
random variables. From this, the statement of the theorem follows in view of $X_\infty(A_{(1)})=1-\eta_1$, 
$X^\flat=\RT(\eta^\flat)$, and $X^\sharp=\RT(\eta^\sharp)$.  
\end{proof}

We call $\mu$ atom-free and diffuse if
\begin{equation*}
  \mu(\{u\})=0\quad \text{and} \quad \mu(A_u)>0\quad\text{for all } u\in\bV.
\end{equation*}
Let $\Sigma_\infty\subset [0,1]^\infty$ be the infinite-dimensional probability simplex, that is, the set of all sequences
$(\rho_i)_{i\in\bN}$ with $\rho_i\ge 0$ for all $i\in\bN$ and $\sum_{i=1}^\infty \rho_i=1$. An atom-free and diffuse $\mu$ 
associates with each $u\in\bV$ an element $\rho(\mu,u)=(\rho_i(\mu,u))_{i\in\bN}$ of $\Sigma_\infty$ via
\begin{equation*} 
         \rho_i(\mu,u):=\frac{\mu(A_{ui})}{\mu(A_u)}\quad \text{for all } i\in\bN. 
\end{equation*}
For later use we note that, for such $\mu$,
\begin{equation*}
  \rho(\mu,u^\sharp) = \rho(\mu^\sharp,u), \quad \rho(\mu,u^\flat) = \rho(\mu^\flat,u)\quad\text{for all } u\in\bV. 
\end{equation*}
Clearly, $\mu$ can be reconstructed form $\rho(\mu,u)$, $u\in\bV$.  In fact,
\begin{equation}\label{eq:fromrho2mu}
  \mu(A_u) = \prod_{i=1}^k \rho_{u_i}\bigl(\mu,(u_1,\ldots,u_{i-1})\bigr)\quad\text{for all } u=(u_1,\ldots,u_k)\in\bV. 
\end{equation}
Of course, for random input both the $\tau$- and the $\kappa$-values will be random, as will be $\mu$. 

We say that a random variable $\xi=(\xi_i)_{i\in\bN}$ 
with values in $\Sigma_\infty$ has the (standard) GEM (Griffiths-Engen-McCloskey) distribution if its components can be written as
\begin{equation}\label{eq:xidef1}
  \xi_1=\zeta_1,\quad \xi_i=\zeta_i\,\prod_{j=1}^{i-1}(1-\zeta_j)\ \text{ for } i>1,
\end{equation}
with $\zeta_i$, $i\in\bN$, independent and $\zeta_i\sim\unif(0,1)$ for all $i\in\bN$.

At each level $k$, the sets $A_{u}$ with $|u|=k$ provide a partition of $\bar\bV\setminus \bN^{k-1}$. The corresponding values
$X_\infty(A_u)$ are related to the $k$th nested decomposition of the unit interval into descending records of the input
sequence. This interpretation suggests the following result, which gives a description of the distribution of $X_\infty$.

\begin{theorem}\label{thm:lim2} 
Let $X_\infty$ be as in Theorem~\ref{thm:lim1}. Then the random variables $\rho(X_\infty,u)$, $u\in\bV$, are independent 
and GEM distributed.
\end{theorem}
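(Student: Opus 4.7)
My plan is to work directly from the algorithmic representation in Theorem~\ref{thm:lim1}. For each $u\in\bV$ the subtree rooted at $u$ is generated by the $\eta$-values landing in the interval $I(u)=(\eta_{\tau(u)},\eta_{(\tau(u):\kappa(u)+1)})$. The key observation is that, after rescaling $I(u)$ to $(0,1)$ by the natural affine bijection, these values form an i.i.d.\ $\unif(0,1)$-sequence $U_1^{(u)},U_2^{(u)},\ldots$ that is independent of everything occurring outside $I(u)$. The $\rho$-vector at $u$ and the $\rho$-vectors at the children of $u$ will all be extracted from this single sequence.

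The first step is to establish $\rho(X_\infty,u)\sim\GEM$ for each fixed $u$. Following the RT rules, the children $u1,u2,\ldots$ of $u$ are created precisely at the descending ladder times of $(U_i^{(u)})_{i\in\bN}$: with $V_0:=1$, $n_1:=1$, $V_i:=U_{n_i}^{(u)}$ and $n_{i+1}:=\min\{j>n_i:U_j^{(u)}<V_i\}$, one checks that $I(ui)$ corresponds in the rescaled picture to the sub-interval $(V_i,V_{i-1})$, so $\rho_i(X_\infty,u)=V_{i-1}-V_i$. A classical computation for i.i.d.\ uniforms gives that the ratios $W_i:=V_i/V_{i-1}$, $i\in\bN$, are i.i.d.\ $\unif(0,1)$; setting $\zeta_i:=1-W_i$ reproduces exactly the stick-breaking representation~\eqref{eq:xidef1}.

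For the joint independence across $u\in\bV$, I proceed by a downward recursion. Condition on the ladder data at $u$, or equivalently on the partition of $I(u)$ into the sub-intervals $I(u1),I(u2),\ldots$: by the thinning properties of i.i.d.\ sequences, the $\eta$-values falling into each $I(ui)$ form, after rescaling that sub-interval to $(0,1)$, an i.i.d.\ $\unif(0,1)$-sequence, and these sequences are mutually independent across~$i$. Thus conditionally the family $(\rho(X_\infty,ui))_{i\in\bN}$ consists of independent GEM-distributed vectors, and since this conditional law does not depend on the conditioning, it is also unconditionally independent of $\rho(X_\infty,u)$. Iterating level by level and invoking the fact that independence of every finite subfamily of a countable family implies joint independence concludes the proof.

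The main obstacle, I expect, is a clean formulation of the thinning step, because the ladder data at $u$ are themselves functions of $(U_i^{(u)})_{i\in\bN}$; a robust way around this is an inductive argument based on Proposition~\ref{prop:decompinfty} and the identities $\rho(\mu,u^\sharp)=\rho(\mu^\sharp,u)$, $\rho(\mu,u^\flat)=\rho(\mu^\flat,u)$. Iterating Proposition~\ref{prop:decompinfty} attaches to each binary word $w\in\{0,1\}^\star$ an independent $\unif(0,1)$-variable $\eta^w$, and each $\rho(X_\infty,u)$ then turns out to be a GEM built from a pairwise-disjoint family $\{\eta^{w(u)1^{j}}\}_{j\ge 0}$ of these, giving independence automatically.
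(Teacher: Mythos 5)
Your primary route---reading $\rho(X_\infty,u)$ off the descending records of the rescaled uniforms in $I(u)$---is correct and genuinely different from the paper's proof, which instead iterates Proposition~\ref{prop:decompinfty} in a top-down induction over all nodes with $|u|\le k$. The single-$u$ computation (successive ladder-value ratios $V_i/V_{i-1}$ are i.i.d.\ $\unif(0,1)$, hence $\rho_i(X_\infty,u)=V_{i-1}-V_i$ gives the stick-breaking of~\eqref{eq:xidef1}) is a standard fact about records and is carried out cleanly. Your thinning step for the joint independence is the right idea, but, as you yourself flag, it is not a plain thinning: the sub-intervals $I(ui)$ are determined by the very sequence being split. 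The tidy way to make it rigorous is to Poissonize the input (place the uniforms on $(0,1)\times(0,\infty)$, read the records off the staircase boundary, and observe that the restrictions of the Poisson process to the open rectangles $(V_i,V_{i-1})\times(t_{n_i},\infty)$ are Poisson, mutually independent, and independent of the record data), or equivalently to condition on the record $\sigma$-field. Your suggested fallback---iterating Proposition~\ref{prop:decompinfty}, labelling the resulting independent uniforms $\eta^w$ by binary words, and observing that each $\rho(X_\infty,u)$ is a stick-breaking over the pairwise-disjoint family $\{\eta^{w(u)\sharp^j}\}_{j\ge 0}$---is in substance the paper's argument, merely indexed by the rotation encoding $w(u)\in\{\flat,\sharp\}^\star$ rather than by tree depth. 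The direct ladder approach buys a concrete picture of $X_\infty$ as nested record intervals and ties the theorem directly to the algorithm of Theorem~\ref{thm:lim1}; the $\flat/\sharp$ iteration buys automatic independence with no delicate conditioning. Both routes are correct.
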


\begin{proof} 
By Proposition~\ref{prop:decompinfty}, $\rho_1(X_\infty,\emptyset)=1-\eta_1=X_\infty(A_{(1)})$, $X_\infty^\flat$ and
$X_\infty^\sharp$ are independent.  Repeating the decomposition with the respective raised part, we obtain that the
variables 
\begin{equation*}
     \frac{\rho_i(X_\infty,\emptyset)}{\rho_{i-1}(X_\infty,\emptyset)}, \quad i\in\bN,
\end{equation*}
with $\rho_o(X_\infty,\emptyset):=1$, are independent and $\unif(0,1)$-distributed, that these 
are independent of the random probability measures $X_{\infty,i}$, $i\in\bN$, defined by
\begin{equation*}
  X_{\infty,i}(A_u) := \frac{X_\infty(A_{(i)+u})}{X_\infty(A_{(i)})}, \quad u\in\bV,
\end{equation*}
and that the these measures are independent and identical in distribution to $X_\infty$. (It is easy to see that
$X_{\infty,i}$ arises as the $\flat$-part of the $i$th iteration of the decomposition) . In particular,
$\rho(X_\infty,\emptyset)\sim\GEM$. Taken together, this proves the case $k=0$ of the
following statement:
\begin{itemize}
\item[(i)] $\rho(X_\infty,u)\sim\GEM$ for all $u\in\bV$ with $|u|\le k$,
\item[(ii)] the random sequences $\rho(X_\infty,u)$,  $u\in\bV$, $|u|\le k$, are independent,
\item[(iii)] the random measures $X_{\infty,v}$, $v\in\bV$, $|v|=k+1$, given by
\begin{equation*}
  X_{\infty,v}(A_u) := \frac{X_\infty(A_{v+u})}{X_\infty(A_v)}, \ u\in\bV,
\end{equation*}
are independent and identical in distribution to $X_\infty$,
\item[(iv)] $\{\rho(X_\infty,u):\, u\in\bV,|u|\le k\}$ and $\{X_{\infty,v}:\, v\in\bV, |v|=k+1\}$ are
independent.
\end{itemize}
We can apply the same reasoning used for $k=0$ separately to each of the nodes at level $k+1$
to obtain the induction step from $k$ to $k+1$.

This shows that the above compound statement holds for all $k\in\bN$; clearly, (i) and (ii) imply
the assertion of the theorem.
\end{proof} 

In view of the fact that $X_\infty(A_u)$ is a function of the variables  $\rho(X_\infty,v)$ with $|v|<|u|$ we obtain
that $X_\infty(A_u)$ and $\rho(X_\infty,u)$ are independent, for all $u\in\bV$.

\subsection{Conditional distributions}\label{subsec:condexp}
In order to be able to use the general limit theorem for the analysis of tree functionals in the next section
we need the conditional distribution of $X_\infty$ given $X_n$. For this we rely on the results
in~\cite[Section~6]{EGW1}; we also need some more notation.

The distribution $\Beta(\alpha,\beta)$ with parameters $\alpha,\beta>0$ is given by its density
\begin{equation}\label{eq:betadens}
    f(t|\alpha,\beta) \, = \, \frac{\Gamma(\alpha+\beta)}{\Gamma(\alpha)\Gamma(\beta)}\,
                                                       t^{\alpha-1}(1-t)^{\beta-1},\quad 0<t<1.
\end{equation}
For later use we recall that
\begin{equation} \label{eq:mombeta}
  E\xi=\frac{\alpha}{\alpha+\beta},\quad E\xi^2=\frac{\alpha(\alpha+1)}{(\alpha+\beta)(\alpha+\beta+1)} 
                                \quad\text{if }\xi\sim\Beta(\alpha,\beta), 
\end{equation}
and, clearly, $\Beta(1,1)=\unif(0,1)$. For
$a=(a_1,\ldots,a_k)\in\bN^\star$ we write $\GEM(a)$ for the distribution of the $\Sigma_\infty$-valued random 
sequence $\xi=(\xi_i)_{i\in\bN}$ given by
\begin{equation}\label{eq:eta2xi}
  \xi_1=\zeta_1, \quad \xi_i=\zeta_i\prod_{j=1}^{i-1} (1-\zeta_j)\quad \text{for  } i > 1,
\end{equation}
where $\zeta_i$, $i\in\bN$, are independent and 
\begin{equation}\label{eq:etadistr}
  \cL(\zeta_i) =  \begin{cases}
                                \Beta\bigl(a_i,1+\sum_{j=i+1}^k a_j\bigr), &\text{for } i<k,\\ 
                                \Beta(a_k,1), &\text{for } i=k,\\ 
                                \Beta(1,1), &\text{for } i>k. \end{cases}
\end{equation}
Interestingly, the marginals of such random sequences are again beta distributed (of course, they are no longer
independent). 

\begin{lemma}\label{lem:GEMmargin}
If $\xi=(\xi_i)_{i\in\bN}\sim\GEM(a)$ with $a=(a_1,\ldots,a_k)\in\bN^\star$, then, with $b:=\sum_{i=1}^ka_i$, 
\begin{equation*}
  \xi_i\sim\Beta(a_i,1+b-a_i)\ \text{ for }\;  i=1,\ldots,k.
\end{equation*}  
Moreover, with $(\zeta_i)_{i\in\bN}$ as in~\eqref{eq:eta2xi} and~\eqref{eq:etadistr}
\begin{equation*}
            1- \sum_{j=1}^k \xi_j\, =\, \prod_{j=1}^k(1-\zeta_j)\sim\Beta(1,b).
\end{equation*}
\end{lemma}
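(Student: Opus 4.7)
\medskip

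\noindent\textbf{Proof plan.} The plan is to reduce both assertions to a single telescoping identity for products of independent beta variables, namely
\begin{equation*}
  X\sim\Beta(\alpha,\beta),\ Y\sim\Beta(\alpha+\beta,\gamma),\ X,Y\text{ indep.}\ \Longrightarrow\ XY\sim\Beta(\alpha,\beta+\gamma),
\end{equation*}
which is a classical computation via the beta function (equivalently, it falls out of the stick-breaking construction of the Dirichlet distribution). I would quote this fact up front and then match parameters.

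First I would dispatch the identity $1-\sum_{j=1}^k\xi_j=\prod_{j=1}^k(1-\zeta_j)$ by a one-line induction on $i$: from \eqref{eq:eta2xi} one has $1-\sum_{j=1}^i\xi_j=(1-\zeta_i)\bigl(1-\sum_{j=1}^{i-1}\xi_j\bigr)$, and the base case $i=1$ is trivial. To obtain the law $\Beta(1,b)$ I would multiply the factors $1-\zeta_j$ in the order $j=k,k-1,\ldots,1$. Note $1-\zeta_k\sim\Beta(1,a_k)$ and $1-\zeta_j\sim\Beta\bigl(1+\sum_{i>j}a_i,\,a_j\bigr)$ for $j<k$. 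Since at stage $j$ the partial product $\prod_{i\ge j+1}(1-\zeta_i)$ has distribution $\Beta\bigl(1,\sum_{i>j}a_i\bigr)$ by induction hypothesis, the first-parameter condition $\alpha+\beta=1+\sum_{i>j}a_i$ matches the first parameter of $1-\zeta_j$ exactly, so the boxed implication kicks in and yields $\Beta\bigl(1,\sum_{i\ge j}a_i\bigr)$. Iterating down to $j=1$ gives $\Beta(1,b)$.

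For the marginals of $\xi_i$ I would use the same telescoping, now starting with $\zeta_i$ and multiplying in the factors $1-\zeta_{i-1},1-\zeta_{i-2},\ldots,1-\zeta_1$ one at a time. The key observation is that the first parameter of $1-\zeta_{j}$, namely $1+\sum_{\ell>j}a_\ell$, is exactly $\alpha+\beta$ for the beta law of the current partial product: starting from $\zeta_i\sim\Beta\bigl(a_i,1+\sum_{\ell>i}a_\ell\bigr)$ (or $\Beta(a_k,1)$ if $i=k$), the product $\zeta_i\prod_{j=i-1}^{i-1}(1-\zeta_j)$ has distribution $\Beta\bigl(a_i,1+\sum_{\ell>i}a_\ell+a_{i-1}\bigr)$, and each subsequent multiplication by $1-\zeta_{j}$ (with $j$ decreasing) simply augments the second parameter by $a_j$ while keeping the first parameter equal to $a_i$. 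After $i-1$ such steps we arrive at $\Beta\bigl(a_i,\,1+\sum_{\ell\neq i}a_\ell\bigr)=\Beta(a_i,1+b-a_i)$, as claimed.

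The main obstacle is purely bookkeeping: one has to choose the order of multiplication so that the parameter-alignment hypothesis $\alpha+\beta=\text{(first parameter of next factor)}$ is satisfied at each step. Multiplying $(1-\zeta_j)$'s from the large-index end inward, and inserting the $\zeta_i$ factor at the start of the inward sweep, is exactly what makes the telescoping go through; any other order would require a different (and more complicated) distributional lemma. An entirely parallel alternative would be to recognize $(\xi_1,\ldots,\xi_k,1-\sum_j\xi_j)\sim\Dir(a_1,\ldots,a_k,1)$ directly from the stick-breaking characterization of Dirichlet laws and read off both marginals from the standard Dirichlet-to-beta reduction; I would mention this as a conceptual cross-check.
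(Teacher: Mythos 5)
Your proof is correct and rests on exactly the same key ingredient the paper cites: the classical product rule for independent beta variables, namely that $X\sim\Beta(\alpha,\beta)$ and $Y\sim\Beta(\alpha+\beta,\gamma)$ independent imply $XY\sim\Beta(\alpha,\beta+\gamma)$. The paper's proof is simply a one-line citation of this rule; you have filled in the parameter-alignment bookkeeping (multiplying the $1-\zeta_j$ inward from the large-index end, inserting $\zeta_i$ at the start of the sweep), which is the right order to make the telescoping go through, but it is the same argument.
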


\begin{proof}
This follows with the known rule for products of independent
beta-distributed random variables, see e.g.~\cite[p.378, Exercise~11.8]{KSO}.
\end{proof}

Recall that the distribution of $X_\infty$ is specified by the (joint) distribution of the $\Sigma_\infty$-valued
quantities $\rho(X_\infty,u)$, $u\in\bV$, and that $\#x(ui)>0$ implies $\#x(uj)>0$ for $j=1,\ldots,i-1$ by property~(H2)
of Harris trees; see also~\eqref{eq:fromrho2mu}.

\begin{theorem}\label{thm:conddistr}
The conditional distribution of $\rho(X_\infty,u)$ given $X_n$ is $\GEM(a)$, where $a=(a_1,\ldots,a_k)$ with
\begin{equation}\label{eq:GEMpar}
    k=\max\{i\in\bN: \, \#X_n(ui)>0\}, \quad a_i=\#X_n(ui)\ \text{ for }i=1,\ldots,k.
\end{equation}
Further, the random sequences $\rho(X_\infty,u)$, $u\in\bV$, are conditionally independent given $X_n$.
\end{theorem}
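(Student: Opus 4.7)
My plan is to derive the theorem from the algorithmic construction of Section~\ref{subsec:alg}, rewriting $\rho_i(X_\infty,u)$ in terms of uniform order statistics and then computing the conditional distribution using standard beta-distribution facts.

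First I would fix a node $u \in \bV$ and, following Theorem~\ref{thm:lim1}, set $p_i := \eta_{\tau(ui)}$ for $i \ge 1$ and $p_0 := \eta_{(\tau(u):\kappa(u)+1)}$, so that $p_0 > p_1 > p_2 > \cdots$ and
\[
  \rho_i(X_\infty,u) \;=\; \frac{p_{i-1}-p_i}{p_0-\eta_{\tau(u)}}, \qquad i\ge 1.
\]
Rescaling $I(u)$ to $(0,1)$ via $\tilde p_i := (p_i - \eta_{\tau(u)})/(p_0 - \eta_{\tau(u)})$ and setting $\zeta_i := 1 - \tilde p_i/\tilde p_{i-1}$ puts $\rho(X_\infty,u)$ into stick-breaking form $\rho_i(X_\infty,u) = \zeta_i\prod_{j<i}(1-\zeta_j)$, so the task reduces to identifying the conditional joint distribution of $(\zeta_i)_{i\ge 1}$ given $X_n$.

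Next I would analyse the conditional law of $\tilde p_1,\ldots,\tilde p_k$ given $X_n$. By the exchangeability of the inputs and the Markov property of $X$, exactly $b := a_1+\cdots+a_k$ of the first $n-1$ inputs fall inside $I(u)$ and, after rescaling to $(0,1)$, are $b$ exchangeable uniform variables. The definition of $\RT$ identifies $\tilde p_j$ as the $r_j$-th smallest of these $b$ values with $r_j := 1+\sum_{i>j}a_i$, since there are precisely $a_{j+1}+\cdots+a_k$ inputs strictly below $\tilde p_j$ and $a_j-1$ strictly between $\tilde p_j$ and $\tilde p_{j-1}$. I then invoke the classical identity that, if $V_{(1)}<\cdots<V_{(b)}$ are uniform order statistics and $b\ge r_1>r_2>\cdots>r_k\ge 1$, the ratios $V_{(r_j)}/V_{(r_{j-1})}$ with $V_{(r_0)}:=1$ are independent and $V_{(r_j)}/V_{(r_{j-1})}\sim\Beta(r_j,r_{j-1}-r_j)$. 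Because $r_{j-1}-r_j = a_j$ and $r_k=1$, this yields $\zeta_j\sim\Beta(a_j,\,1+\sum_{i>j}a_i)$ for $j<k$ and $\zeta_k\sim\Beta(a_k,1)$, mutually independent, matching~\eqref{eq:etadistr}.

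For $j>k$ the fractions $\zeta_j$ are determined by the future inputs $\eta_m$ ($m\ge n$) that land in the residual $(0,\tilde p_k)$; conditional on $X_n$ these are still iid $\unif(0,1)$, giving $\zeta_j\sim\Beta(1,1)$ independently of the past and completing the identification of the conditional law as $\GEM(a)$. Conditional independence of the $\rho(X_\infty,u)$ across distinct $u$ would then follow by applying the same argument node by node and noting that the inputs relevant to different $u$'s occupy disjoint subintervals of $(0,1)$, which mirrors the nested-decomposition step in the proof of Theorem~\ref{thm:lim2}.

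The main obstacle is the rigorous justification of the second step, namely that the full information in $X_n$ does not bias the conditional distribution of the $b$ rescaled inputs in $I(u)$ beyond the partition counts $a_1,\ldots,a_k$. Additional information encoded in $X_n$ pertains to sub-tree structures inside each $I(uj)$ and outside $I(u)$, and one must verify that, by the space-time Markov property of $X$ together with the exchangeability of the input sequence, this supplementary conditioning is independent of the order-statistic structure that governs $\rho(X_\infty,u)$. This is the point at which one would lean on the detailed analysis carried out in~\cite[Section~6]{EGW1}.
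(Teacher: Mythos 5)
Your proposal takes a genuinely different route from the paper's proof. The paper invokes the general boundary-theory fact that the conditional law of $X_\infty$ given $X_n=x$ has density $K(x,\cdot)$ with respect to the unconditional law, then cites the explicit extended Martin kernel from~\cite[Section~6]{EGW1}, observes that it factors over $u\in\bV$ (which gives conditional independence), and finally matches each factor $K_u$ to the density of $\GEM(a)$ under the stick-breaking change of variables $T$. You instead work directly with the algorithmic representation of Theorem~\ref{thm:lim1}: you identify the stick-breaking ratios $\zeta_i=1-\tilde p_i/\tilde p_{i-1}$ with ratios of rescaled order statistics of the inputs that have landed in $I(u)$, pin down the ranks $r_j=1+\sum_{i>j}a_i$ from the subtree sizes, and invoke the classical fact that nested ratios of uniform order statistics are independent beta variables (a consequence of the $V_{(j)}/V_{(j+1)}\sim\Beta(j,1)$ decomposition together with the product rule for independent betas already cited in Lemma~\ref{lem:GEMmargin}). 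This is a more elementary and probabilistically transparent route, and it buys a representation-level explanation of why the GEM parameters are exactly the subtree sizes, which the paper's kernel computation obtains only after a ``bookkeeping task.''

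On the gap you flag at the end: you are right that it is the crux, but your instinct to fall back on~\cite{EGW1} is unnecessary and in fact would erase the advantage of your approach. The clean way to close it is to observe that $X_n$ (indeed the entire history $Y_n$, i.e.\ the full permutation of $\eta_1,\ldots,\eta_{n-1}$) is a function only of the relative order of the inputs, which is independent of the vector of order statistics $U_{(1)}<\cdots<U_{(n-1)}$. Hence, conditioning on $X_n$ leaves the order statistics with their unconditional law, while fixing the ranks $s$, $s+r_1,\ldots,s+r_k$, $s+b+1$ (of $\eta_{\tau(u)}$, the $p_j$'s, and $p_0$) as deterministic functions of the conditioning, with $b$ and the $r_j$'s functions of $X_n$ alone. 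The standard rescaling property of uniform order statistics then gives that $(\tilde p_1,\ldots,\tilde p_k)$ is conditionally distributed as $(V_{(r_1)},\ldots,V_{(r_k)})$ from $b$ uniforms, which is exactly what you use. The same independence of order statistics from the permutation, applied simultaneously over nested intervals, also supplies the conditional independence across distinct $u$ (your phrase ``disjoint subintervals'' should be ``disjoint blocks of consecutive-ratio variables''; the intervals $I(u)$ are nested, not disjoint, but the ratio variables they govern are disjoint). Two small points to tidy: set $r_0:=b+1$ (and $V_{(b+1)}:=1$) so that the claimed $\Beta(r_j,r_{j-1}-r_j)$ applies for $j=1$ as well, and note separately that for $u\notin X_n$ the entire sequence $\rho(X_\infty,u)$ is driven by inputs at times $\geq n$ and is therefore independent of $\cF_n$ with law $\GEM=\GEM(\emptyset)$.
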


\begin{proof}
By the general theory of Markov chain boundaries the conditional distribution $Q_2$ of $X_\infty$ given 
$X_n=x\in\bH_n$ has density $K(x,\cdot)$ with respect to the (unconditional) distribution $Q_1$ of $X_\infty$, 
where $K$ denotes the extended Martin kernel. Note that $Q_1$ and $Q_2$ are probability measures on
the set $\bar\bH$ of probability measures $\mu$ on $(\bar\bV,\cV)$, where the $\sigma$-field on $\bar\bH$ is the
one generated by the evaluation maps $\mu\mapsto \mu(A_u)$, $u\in\bV$.  
The extended Martin kernel has been determined in~\cite{EGW1}: It can be written 
as the product of `local extended kernels',
\begin{equation}\label{eq:Kprod}
  K(x,\mu) = \prod_{u\in\bV} K_u\bigl(u(x),\rho(\mu,u)\bigr),
\end{equation}
where $u(x)=(a_1,\ldots,a_k)\in\bN^\star$ is given by~\eqref{eq:GEMpar} with $x$ instead of $X_n$, and
\begin{equation}\label{eq:Kfact}
  K_u(x,s)\ = \ \frac{\bigl(\sum_{i=1}^k a_i\bigr)!}{\prod_{i=1}^k(a_i-1)!} \; \prod_{i=1}^k s_i^{a_i-1}
                           \; \prod_{i=1}^{k-1}\Bigl(1-\sum_{j=1}^i s_i\Bigr)
\end{equation}
for all $s=(s_i)_{i\in\bN}\in\Sigma_\infty$. Also, $K_u(x,\cdot)$ is the conditional density of the distribution
$Q_{2,u}$ of $\rho(X_\infty,u)$ given $X_n=x$ with respect to its corresponding unconditional
counterpart~$Q_{1,u}$, which we know to be the GEM distribution. The product form~\eqref{eq:Kprod} implies that the
independence of the sequences $\rho(X_\infty,u)$, $u\in\bV$, remains intact in the transition from $Q_1$ 
to $Q_2$. This proves the second part of the theorem. 

Now let $T:\Sigma_\infty\to [0,1]^\infty$ be given by
\begin{equation*}
  (s_i)_{i\in\bN} \mapsto (t_i)_{i\in\bN}, \quad t_i:=\frac{s_i}{1-s_1-\cdots -s_{i-1}} \ \text{ for all } i\in\bN.
\end{equation*}
This is the inverse of the transition from $\zeta$ to $\xi$ in~\eqref{eq:eta2xi}. We know that the push-forward 
$Q_{1,u}^T$ of $Q_{1,u}$ under $T$ is the infinite product of uniforms. The first part of the 
theorem refers to the push-forward $Q_{2,u}^T$ of $Q_{2,u}$ under $T$; it asserts that a density
of $Q_{2,u}^T$ with respect to $Q_{1,u}^T$ is given by
\begin{equation*}
 g(t) \; = \; \prod_{i=1}^{k-1} f\Bigl(t_i\Big|a_i,\sum_{j=i+1}^k a_j\Bigr) \cdot f(t_k|a_k,1)
\end{equation*}
for almost all $t=(t_i)_{i\in\bN}\in [0,1]^\infty$, with $f$ as in~\eqref{eq:betadens}. 
With all this notation in place it remains to check that $g\circ T=K_u(x,\cdot)$, with $K_u$ as in~\eqref{eq:Kfact}. 
This, however, is a bookkeeping task.
\end{proof}

The embedding of $\bH$ into $\bar\bH$, which maps $X_n$ to the uniform distribution on its nodes, leads to an
interpretation of $X_n$ as a real-valued random function on $\bV$ via $u\mapsto \#X_n(u)/n$. Similarly, the limit
$X_\infty$ can be seen as the random function $u\mapsto X_\infty(A_u)$ on $\bV$. Obviously, all these functions are
bounded and, if we endow $\bV$ with the discrete topology, they are continuous. This displays $X_n$, $n\in\bN$, and
$X_\infty$ as random elements of an infinite-dimensional separable Banach space.

\begin{corollary} \label{cor:toThmconddistr}
Let $(\cF_n)_{n\in\bN}$, with $\cF_n:=\sigma(X_1,\ldots,X_n)$, $n\in\bN$, be the natural filtration 
of the Harris chain $(X_n)_{n\in\bN}$. Then 
\begin{equation*}
  X_n=E[X_\infty|\cF_n]\quad\text{for all } n\in\bN.
\end{equation*}
In particular, $(X_n,\cF_n)_{n\in\bN}$ is a martingale.  
\end{corollary}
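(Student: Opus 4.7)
The plan is to reduce the Banach-space equality to the scalar identity $\#X_n(u)/n = E[X_\infty(A_u)\,|\,\cF_n]$ for each $u\in\bV$. By the Markov property of $X$, conditioning on $\cF_n$ is the same as conditioning on $X_n$, so it suffices to compute $E[X_\infty(A_u)\,|\,X_n]$ directly from the distributional description provided by Theorem~\ref{thm:conddistr}.

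Using the product representation \eqref{eq:fromrho2mu} of $X_\infty(A_u)$ in terms of the ratios $\rho(X_\infty,v)$, together with the conditional independence of these sequences given $X_n$ (Theorem~\ref{thm:conddistr}), the conditional expectation factors as
\[
E\bigl[X_\infty(A_u)\,\big|\,X_n\bigr] \,=\, \prod_{j=1}^{|u|} E\bigl[\rho_{u_j}\bigl(X_\infty,(u_1,\ldots,u_{j-1})\bigr)\,\big|\,X_n\bigr].
\]
The conditional law of $\rho(X_\infty,(u_1,\ldots,u_{j-1}))$ is $\GEM(a^{(j)})$, where $a^{(j)}$ records the subtree sizes of the immediate children of $(u_1,\ldots,u_{j-1})$ in $X_n$. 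Writing $b^{(j)}:=\sum_i a^{(j)}_i = \#X_n((u_1,\ldots,u_{j-1}))-1$, Lemma~\ref{lem:GEMmargin} combined with the $\Beta$ mean formula in \eqref{eq:mombeta} evaluates the $j$-th factor to
\[
\frac{a^{(j)}_{u_j}}{1+b^{(j)}} \,=\, \frac{\#X_n((u_1,\ldots,u_j))}{\#X_n((u_1,\ldots,u_{j-1}))}.
\]
The resulting product telescopes to $\#X_n(u)/\#X_n(\emptyset) = \#X_n(u)/n$, which is exactly the value of $X_n$ at $u$ under the embedding.

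The martingale claim is then an immediate consequence of the tower property applied to the identity just established:
\[
E[X_{n+1}\,|\,\cF_n] \,=\, E\bigl[E[X_\infty\,|\,\cF_{n+1}]\,\big|\,\cF_n\bigr] \,=\, E[X_\infty\,|\,\cF_n] \,=\, X_n.
\]
The main technical hurdle I anticipate is accurate bookkeeping of the parameter vectors $a^{(j)}$ along the path $(u_1,\ldots,u_{|u|})$: one must check that Lemma~\ref{lem:GEMmargin}, possibly via a small extension covering the uniform tail of the GEM sequence, delivers the ratio $\#X_n((u_1,\ldots,u_j))/\#X_n((u_1,\ldots,u_{j-1}))$ in every relevant case, so that the telescoping collapse is consistent with the vanishing of both sides on the boundary case $u\notin X_n$.
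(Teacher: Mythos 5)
Your proof follows the same route as the paper's: reduce to the scalar identity via the Markov property, factor $X_\infty(A_u)=\prod_j\xi_j$ through \eqref{eq:fromrho2mu}, use the conditional independence and $\GEM(a)$ description from Theorem~\ref{thm:conddistr}, evaluate each factor with Lemma~\ref{lem:GEMmargin} and \eqref{eq:mombeta}, and telescope. That matches the paper exactly. However, the ``hurdle'' you flag at the end is a real gap, and your anticipated resolution of it is wrong: it is \emph{not} the case that both sides vanish when $u\notin X_n$.

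Concretely, Lemma~\ref{lem:GEMmargin} gives $\xi_i\sim\Beta(a_i,1+b-a_i)$ only for $i\le k=|a|$. If along the path you hit an index $u_j$ that exceeds the number of nonempty children of $(u_1,\ldots,u_{j-1})$ in $X_n$ (or the prefix is not in $X_n$ at all, so that $a=\emptyset$), you are in the tail regime $i>k$, where the $\GEM(a)$ marginal involves the extra uniform $\zeta$'s and is strictly positive in mean ($E\xi_i=2^{-(i-k)}/(1+b)$). So $E[\xi_j\,|\,X_n]>0$ there, not $a^{(j)}_{u_j}/(1+b^{(j)})=0$, and your formula $b^{(j)}=\#X_n((u_1,\ldots,u_{j-1}))-1$ also fails for prefixes outside $X_n$. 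A direct sanity check: with $n=2$ one has $X_2=\{\emptyset,(1)\}$ deterministically, so $E[X_\infty(A_{(1,1)})\,|\,\cF_2]=E[X_\infty(A_{(1,1)})]=E[\rho_1(X_\infty,\emptyset)]\,E[\rho_1(X_\infty,(1))]=1/4$, whereas $\tfrac12\#X_2((1,1))=0$. So the telescoping identity holds for $u\in X_n$, but the coordinate-wise equality $\tfrac1n\#X_n(u)=E[X_\infty(A_u)\,|\,\cF_n]$ genuinely fails for $u\notin X_n$; this is the same gap that is present in the paper's own short proof, which uses the bookkeeping formula for $E[\xi_i\,|\,X_n]$ without restricting to the case $u\in X_n$. (For the downstream applications in Section~3 the paper is nevertheless safe: in Theorems~\ref{thm:TPL} and~\ref{thm:HPL} the $u\notin X_n$ terms vanish because $E[C(\rho(X_\infty,u))\,|\,\cF_n]=0$ there, and in Theorem~\ref{thm:WI} the $u\notin X_n$ contribution is computed explicitly and is nonzero.) So: be explicit that your calculation establishes the identity only for $u\in X_n$, and do not claim that it extends to all $u\in\bV$ by a vanishing argument.
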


\begin{proof}
We have $E[X_\infty|\cF_n]=E[X_\infty|X_n]$ due to the Markov property. Further, the notion of infinite-dimensional martingale,
see e.g.~\cite[Section-V.2]{NeveuMart}, in the present context means that we have to check that
\begin{equation*}
  E[X_\infty(A_u)|X_n] = \frac{1}{n}\,\# X_n(u)\quad\text{for all } u\in\bV.
\end{equation*}
Let $u=(u_1,\ldots,u_k)\in\bV$ be given and let 
\begin{equation*}
  \xi_i := \rho_{u_i}\bigl(X_\infty,(u_1,\ldots,u_{i-1})\bigr),\quad i=1,\ldots,k.
\end{equation*}
From~\eqref{eq:fromrho2mu} we obtain $X_\infty(A_u)=\prod_{i=1}^k \xi_i$, and by Theorem~\ref{thm:conddistr} the factors 
are conditionally independent given $X_n$. Hence, using Lemma~\ref{lem:GEMmargin} and~\eqref{eq:mombeta},
\begin{align*}
  E[X_\infty(A_u)|X_n] \ &=\ \prod_{i=1}^k E[\xi_i|X_n] \\
          &=\ \prod_{i=1}^k \frac{\# X_n\bigl((u_1,\ldots,u_i)\bigr)}
                               {1+\sum_{j=1}^\infty \#X_n\bigl((u_1,\ldots,u_{i-1},j)\bigr)} \\
         &=\ \prod_{i=1}^k \frac{\# X_n\bigl((u_1,\ldots,u_i)\bigr)}
                               {\#X_n\bigl((u_1,\ldots,u_{i-1})\bigr)} \quad
        =\ \frac{1}{n}\, \#X_n(u).  \qedhere
\end{align*}
\end{proof}

This result may be seen as a consequence of the general Doob-Martin construction. 

\section{Tree functionals}\label{sec:func}
Let $Y=(Y_n)_{n\in\bN}$ be the RRT chain and let $X=(X_n)_{n\in\bN}$, with $X_n=\Psi(Y_n)$ for all $n\in\bN$, be the
associated Harris chain. In this section we consider functionals of the recursive trees that are invariant under
$\Psi$ and hence can be written as functions $V_n=\Phi(X_n)$ of the $X$-variables. A typical example is the
total path length, which is the sum of the depth of all nodes in the tree. The methods discussed below can be 
applied to fairly general functions~$\Phi$, but here we will restrict ourselves to the real-valued case.

There are two main probabilistic methods to obtain distributional or even strong limit results for suitably standardized
versions of the $V$-variables. In the first of these, we try to find a suitable martingale and then apply a martingale
limit theorem. In the second, we use the internal structure of the $X$-variables to find a recursion for the
$V$-variables and then apply Banach's fixed point theorem with a suitably chosen metric space of probability 
measures. The prototypical example is the number of comparisons needed by the Quicksort algorithm,
which can be related to the total path length of binary search trees: The martingale approach is carried out 
in~\cite{RegnierQS}, whereas~\cite{RoeQS} employed the second approach, which since then has come to be 
known as  the contraction method. The two methods may fruitfully be combined, as exemplified by~\cite{RRTpathlength}
in connection with the total path length of random recursive trees. 

On its own the martingale method does not say anything about the limit, and the contraction method may miss the fact
that the random variables themselves converge. The method suggested in the present paper and in~\cite{GrMtree} needs
some additional investment in connection with proving the convergence of the discrete structures themselves but then
provides a unifying approach: In view of the fact that $X_\infty$ generates the tail $\sigma$-field associated with the
Harris chain, see property (T) in Section~\ref{subsec:DMHarris}, any almost sure limit $Y_\infty$ must be a functional
$Y_\infty=\Psi(X_\infty)$ of $X_\infty$, up to null sets. Projecting $Y_\infty$ on the natural filtration we obtain a
convergent martingale, which often turns out to be a simple transformation of the variables $V_n$. Below we carry this
out for two versions of the total path length and for the Wiener index.

\subsection{Total path length}\label{subsec:TPL}
This is simply the sum of all node depths and can be written in terms of subtree sizes as
\begin{equation*}
  \TPL(x) \; :=\;  \sum_{u\in x} |u|\;   =\;  \sum_{u\in x} \#x(u) -\#x, \quad x\in\bH.
\end{equation*}
Here we have written $|u|$ for the length (or depth) $k$ of $u=(u_1,\ldots,u_k)\in\bV$.
We need the auxiliary function 
\begin{equation*}
  C: \Sigma_\infty\to [-\infty,1], \quad (s_i)_{i\in\bN}\mapsto 1 + \sum_{i=1}^\infty s_i\log s_i.
\end{equation*}
The harmonic numbers 
\begin{equation*}
  H_0:=1 , \quad H_n:=\sum_{k=1}^n \frac{1}{k} \ \text{ for all } n\in\bN,
\end{equation*}
will appear repeatedly;  we will write $H(n)$ instead of $H_n$ whenever this is typographically more convenient.
We collect some auxiliary statements.

\begin{lemma}\label{lem:TPL1} \emph{(a)} If $\xi\sim \GEM(a)$ for some $a\in\bN^\star$ then $\|C(\xi)\|_p<\infty\;$ for all $p\ge 1$.
  
\smallbreak
\emph{(b)} If $\xi\sim \GEM(a)$ with $a=(a_1,\ldots,a_k)\in\bN^\star$, then
\begin{equation}\label{eq:EWC}
  EC(\xi) \; = \; 1\, + \, \frac{\sum_{i=1}^k a_i H(a_i)}{1 + \sum_{i=1}^k a_i}\, - \, H\Bigl(1+\sum_{i=1}^k a_i\Bigr).
\end{equation}
In particular, $EC(\xi)=0\,$ if $\xi\sim \GEM$. 
\end{lemma}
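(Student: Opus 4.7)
The plan is to reduce both parts to a single identity that rewrites the series $\sum_i \xi_i \log \xi_i$ in terms of the underlying stick-breaking variables. Setting $W_j := \prod_{l=1}^j(1-\zeta_l)$ with $W_0:=1$, so that $\xi_i = W_{i-1} - W_i = \zeta_i W_{i-1}$, I would split $\log \xi_i = \log \zeta_i + \log W_{i-1}$, expand $\log W_{i-1} = \sum_{l<i}\log(1-\zeta_l)$, and interchange summations (legitimate as all terms are non-positive). The inner telescoping $\sum_{i>l}(W_{i-1}-W_i) = W_l$, combined with $W_l = (1-\zeta_l) W_{l-1}$, then yields
\begin{equation*}
  \sum_{i=1}^\infty \xi_i \log \xi_i \;=\; -\sum_{i=1}^\infty W_{i-1}\, h(\zeta_i),\qquad h(z):=-z\log z-(1-z)\log(1-z),
\end{equation*}
which recasts $C(\xi)-1$ as a series of products of two independent factors. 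The telescoping uses $W_i\to 0$ a.s., which is clear since $E\log(1-\zeta_l)<0$ for $l>k$.

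For part (a), the binary entropy $h$ is bounded by $\log 2$ on $[0,1]$, so $|C(\xi)-1|\le (\log 2)\,S$ with $S:=\sum_{i=1}^\infty W_{i-1}$, and it suffices to check $\|S\|_p<\infty$ for every integer $p\ge 1$. I would write $S = \sum_{i=1}^{k+1} W_{i-1} + W_k\,S'$ where, using the independence and uniformity of the $\zeta_i$ for $i>k$, $S'$ has the same distribution as $\sum_i\prod_{l<i}(1-\eta_l)$ for iid $\eta_l\sim\unif(0,1)$ and is independent of $W_k$. Since $W_k\le 1$ and the first piece is bounded by $k+1$, one only needs $\|S'\|_p<\infty$. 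This follows from the distributional identity $S'\dequ 1+(1-\eta)\tilde S'$ with $\tilde S'\dequ S'$ independent of $\eta\sim\unif(0,1)$: expanding $(S')^p$ by the binomial theorem and isolating the leading term $\frac{1}{p+1}E(S')^p$ gives a recursion that determines $E(S')^p$ from lower moments, so induction on $p$ closes the bound.

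For part (b), the identity together with (a) legitimises taking expectations term by term, so $EC(\xi) = 1 - \sum_{i=1}^\infty E[W_{i-1}]\,E[h(\zeta_i)]$ by independence of $W_{i-1}$ and $\zeta_i$. With $s_i := \sum_{j\ge i}a_j$ and $s_{k+1}:=0$, telescoping $E[1-\zeta_j] = (1+s_{j+1})/(1+s_j)$ yields $E[W_{i-1}] = (1+s_i)/(1+b)$ for $i\le k+1$ and then $(1/2)^{i-1-k}/(1+b)$ for $i>k+1$. The classical beta identity $E[\zeta\log\zeta] = \frac{\alpha}{\alpha+\beta}(H_\alpha - H_{\alpha+\beta})$ (for integer $\alpha$) and its symmetric counterpart then convert $E[h(\zeta_i)]$ into harmonic-number differences. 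After multiplication by $E[W_{i-1}]$ the $(1+s_i)$-denominators cancel, the terms $(1+s_i)H_{1+s_i}$ telescope over $i=1,\ldots,k$, and the leftover boundary term $H_{1+s_{k+1}}=H_1=1$ is exactly cancelled by the geometric tail $\sum_{i>k}E[W_{i-1}]\,E[h(\zeta_i)] = 1/(1+b)$ (using $E[h(\zeta_i)]=1/2$ for uniform $\zeta_i$). What remains is precisely $1 + (\sum_i a_i H_{a_i})/(1+b) - H_{1+b}$; the standard $\GEM$-case is the empty tuple $a=()$, $b=0$, giving $EC(\xi) = 1 - H_1 = 0$.

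The main obstacle is the bookkeeping in part (b): keeping the beta-distributions for $i<k$, $i=k$ and $i>k$ straight, converting their moments correctly into integer harmonic numbers, and confirming that the telescoping over $i\le k$ combines with the uniform-tail contribution from $i>k$ to produce exactly the claimed closed form. Part (a) is then conceptually routine once the identity is in place.
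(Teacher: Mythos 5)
Your proof is correct, but it takes a genuinely different route from the paper's. The key move is the identity $\sum_i \xi_i\log\xi_i = -\sum_i W_{i-1}h(\zeta_i)$ with $W_j=\prod_{l\le j}(1-\zeta_l)$ and $h$ the binary entropy; this reorganises $C(\xi)-1$ as a series of products of two \emph{independent} factors and lets you handle both (a) and (b) from one formula. The paper instead works directly with the marginals of $\xi$: for (a) it bounds $\|\xi_i\log\xi_i\|_p$ termwise (using $\xi_i=\zeta_i\prod_{j<i}(1-\zeta_j)$ and H\"older/triangle inequalities) to get exponential decay, only treating $a=\emptyset$ explicitly; for (b) it invokes Lemma~\ref{lem:GEMmargin} to obtain the beta marginals $\xi_i\sim\Beta(a_i,1+b-a_i)$ for $i\le k$ and a beta-times-product-of-uniforms decomposition for $i>k$, then sums $E\xi_i\log\xi_i$ directly. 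Your approach buys a uniform treatment of the two parts and avoids Lemma~\ref{lem:GEMmargin} entirely, replacing the beta-marginal computation by the stick-breaking telescope $E[W_{i-1}]=(1+s_i)/(1+b)$; the paper's approach has the virtue of leaning on a clean marginal result that is reused elsewhere in Section~3. I checked your computation in (b): the sum over $i\le k$ produces $H_{1+b}-\tfrac{1}{1+b}-\tfrac{\sum a_iH_{a_i}}{1+b}$, the geometric tail over $i>k$ contributes $\tfrac{1}{1+b}$, and the two combine to the claimed closed form. Two cosmetic remarks: the split should read $S=\sum_{i=1}^{k}W_{i-1}+W_k S'$ rather than $\sum_{i=1}^{k+1}$ (your version counts $W_k$ twice); and the moment recursion for $S'$ is best run on the truncated sums $S'_N$ so that all quantities are finite a priori before letting $N\to\infty$, though this is routine.
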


\begin{proof} For the proof of the first part we assume that $a=\emptyset$ and use the representation 
of $\xi$ by a sequence $(\zeta_i)_{i\in\bN}$ of independent random
variables with distribution $\unif(0,1)$, see~\eqref{eq:eta2xi}. Then, for each $i\in\bN$,
\begin{align*}
  \|\xi_i\log\xi_i\|_p
       \ &=\ \Bigl\| \Bigl(\zeta_i\prod_{j=1}^{i-1}(1-\zeta_j)\Bigr)\Bigl(\log\zeta_i +
                                                        \sum_{k=1}^{i-1}\log(1-\zeta_k)\Bigr)\Bigr\|_p\\
       &\le\ \Bigl\|\zeta_i(\log\zeta_i)\prod_{j=1}^{i-1}(1-\zeta_j)\Bigr\|_p \; +\; 
                         \sum_{k=1}^{i-1}\Bigl\|\zeta_i \log(1-\zeta_k)\prod_{j=1}^{i-1}(1-\zeta_j)\Bigr\|_p \\
       &=\ \|\zeta_i\log\zeta_i\|_p\prod_{j=1}^{i-1}\|1-\zeta_j\|_p\\
       &\hspace{1.5cm}                \; +\; \sum_{k=1}^{i-1}\|\zeta_i\|_p\|(1-\zeta_k)\log(1-\zeta_k)\|_p
                                           \prod_{j\in [i-1]\setminus\{k\}}\|1-\zeta_j\|_p\\
      &=\ \|\zeta_1\log\zeta_1\|_p\|\zeta_1\|_p^{i-1}
                              \; +\; (i-1)\|\zeta_1\|_p\|\zeta_1\log\zeta_1\|_p\|\zeta_1\|_p^{i-2}, 
\end{align*}
where we have used independence and $\cL(\zeta_i)=\cL(1-\zeta_i)=\cL(\zeta_1)$. In view of 
\begin{equation*}
  \int_0^1|t^p(\log t)^p|\, dt<\infty, \qquad \|\zeta_1\|_p<1,
\end{equation*}
this shows that  $\|\xi_i\log\xi_i\|_p$ decreases at an exponential rate as $i\to\infty$.  The generalization to an
arbitrary $a\in\bN^\star$ is straightforward.

For the proof of (b) we first note that, for $\zeta\sim\Beta(i,j)$ with $i,j\in\bN$, 
\begin{equation}\label{eq:logbeta}
   E\bigl(\zeta\log(\zeta)\bigr) \; =\; \frac{i}{i+j}\, \bigl(H_i-H_{i+j}\bigr).
\end{equation}
Suppose now that $\xi\sim\GEM(a)$ with
$a=(a_1,\ldots,a_k)\in\bN^\star$ and let $b:=\sum_{j=1}^k a_j$. We have $\xi_i\sim\Beta(a_i,1+b-a_i)$
for  $j=1,\ldots,k$ by Lemma~\ref{lem:GEMmargin},  hence 
\begin{equation*}
  E\xi_i\log \xi_i = \frac{a_i}{1+b}\, \bigl(H(a_i)-H(1+b)\bigr), \quad  i=1,\ldots,k.
\end{equation*}
Using the second part of Lemma~\ref{lem:GEMmargin} we see that for $i>k$ we may write $\xi_i=\alpha_i\beta_i$ 
with $\alpha_i$ and $\beta_i$ independent, $\alpha_i\sim\Beta(1,b)$ and $\beta_i$ the product of 
$i-k$ independent $\unif(0,1)$-distributed random variables. This gives, using~\eqref{eq:logbeta} again,
\begin{align*}
  E\xi_i\log \xi_i\ &=\ E\alpha_i \; E\beta_i\log\beta_i\; + \; E\beta_i \; E\alpha_i\log \alpha_i\\
      &=\ \frac{1}{1+b} \,\frac{i-k}{2^{i-k-1}}\, \frac{(-1)}{4}
                \;+\;   \frac{1}{1+b} \bigl(H(1)-H(1+b)\bigr)\, \frac{1}{2^{i-k}},
\end{align*}
so that, after some elementary manipulations,
\begin{equation*}
  \sum_{i=k+1}^\infty E\xi_i\log \xi_i\; =\; -\,\frac{H(1+b)}{1+b}.
\end{equation*}
Putting pieces together we finally arrive at~\eqref{eq:EWC}.
\end{proof}

Let 
\begin{equation}\label{eq:l1norm}
  |u|_1:=\sum_{i=1}^k u_i\quad \text{for all } u=(u_1,\ldots,u_k)\in\bV.
\end{equation}

\begin{lemma}\label{lem:TPL2} For $u\in\bV$ with $|u|_1=k$,
  \begin{equation*}
    X_\infty(A_u) \,\dequ\, \prod_{i=1}^k \zeta_i,
 \end{equation*}
with $\zeta_1,\ldots,\zeta_k$ independent, $\zeta_i\sim\unif(0,1)$ for $i=1,\ldots,k$.
\end{lemma}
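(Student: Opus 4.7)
The plan is to combine the product formula~\eqref{eq:fromrho2mu} with the independence and GEM description of $\rho(X_\infty,v)$, $v\in\bV$, provided by Theorem~\ref{thm:lim2}, and then unfold each GEM-coordinate into its stick-breaking form~\eqref{eq:xidef1}.

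First, writing $u=(u_1,\ldots,u_m)$ so that $u_1+\cdots+u_m=k$, I would apply~\eqref{eq:fromrho2mu} to get
\begin{equation*}
   X_\infty(A_u)\; =\; \prod_{i=1}^m \rho_{u_i}\bigl(X_\infty,(u_1,\ldots,u_{i-1})\bigr).
\end{equation*}
The $m$ arguments $(u_1,\ldots,u_{i-1})$, $i=1,\ldots,m$, are pairwise distinct elements of $\bV$, so by Theorem~\ref{thm:lim2} the $m$ random sequences $\rho\bigl(X_\infty,(u_1,\ldots,u_{i-1})\bigr)$ are independent, each with standard $\GEM$ distribution.

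Next, for a single $\GEM$-distributed sequence $\xi=(\xi_j)_{j\in\bN}$, the representation~\eqref{eq:xidef1} gives
\begin{equation*}
     \xi_{j}\; =\; \zeta_{j}\,\prod_{l=1}^{j-1}(1-\zeta_{l})
\end{equation*}
with independent uniforms $\zeta_l$; since $1-\zeta_l\dequ \zeta_l\sim\unif(0,1)$, each $\xi_j$ is itself, in distribution, a product of $j$ independent $\unif(0,1)$-variables. Applied to the $i$-th factor above, this shows that $\rho_{u_i}(X_\infty,(u_1,\ldots,u_{i-1}))$ equals in distribution a product of $u_i$ independent uniforms.

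Finally, because the $m$ factors in the product expression for $X_\infty(A_u)$ are independent, the families of uniforms coming from each factor can be taken independent, yielding altogether $u_1+\cdots+u_m=k$ independent $\unif(0,1)$-factors. There is no real obstacle here; the only point that needs a quick verification is the distinctness of the prefix arguments that certifies independence across $i$, and the routine observation that $1-\unif(0,1)\dequ\unif(0,1)$ turns each stick-breaking block of size $u_i$ into a product of $u_i$ iid uniforms.
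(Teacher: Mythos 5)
Your proof is correct. The paper organizes the factorization differently: it introduces the predecessor/elder-sibling map $v\mapsto\bar v$ (drop a final $1$, or decrement the final coordinate) and the chain $\emptyset=u[0],u[1],\ldots,u[k]=u$ with $u[i-1]=\bar u[i]$, then asserts that the consecutive ratios $X_\infty(A_{u[i]})/X_\infty(A_{u[i-1]})$ are independent $\unif(0,1)$ variables, citing~\eqref{eq:xidef1} for right-steps and Theorem~\ref{thm:lim2} for down-steps, and multiplies the $k$ ratios to obtain $X_\infty(A_u)$. Your version groups the same $k$ elementary factors in a different order: first the level-wise factorization~\eqref{eq:fromrho2mu} into $m=|u|$ GEM coordinates, independent across distinct prefix arguments by Theorem~\ref{thm:lim2}, then the stick-breaking expansion~\eqref{eq:xidef1} of each $\GEM$ coordinate $\xi_{u_i}$ into $u_i$ independent uniform factors (using $1-\zeta\dequ\zeta$). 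This regrouping is arguably the more transparent route: the paper's ratio for a right-step is $\xi_w/\xi_{w-1}=\zeta_w(1-\zeta_{w-1})/\zeta_{w-1}$, which is \emph{not} itself $\unif(0,1)$-distributed (it can exceed $1$); the $k$ independent uniforms emerge only after telescoping and re-grouping the $\bar{\cdot}$-chain block by block along the Harris-tree path from $\emptyset$ to $u$, which is exactly what your block decomposition does directly. The only small point worth stating explicitly is that~\eqref{eq:fromrho2mu} and the $\rho(X_\infty,\cdot)$ quantities are well defined almost surely because $X_\infty$ is a.s.\ atom-free and diffuse; this is implicit in Theorem~\ref{thm:lim2} and the paper uses it tacitly as well.
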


\begin{proof} With each $u=(u_1,\ldots,u_l)\in\bV$ we associate its direct predecessor respectively direct elder
sibling $\bar u$ by
\begin{equation*}
  \bar u:=
  \begin{cases}
    (u_1,\ldots,u_{l-1}), &\text{if } u_l=1,\\ (u_1,\ldots,u_{l-1},u_l-1), &\text{if } u_l>1.
  \end{cases}
\end{equation*}
We may then connect the root $\emptyset=:u[0]$ to $u[k]:=u$ with nodes $u[i]$, $i=1,\ldots,k-l$ in such a way
that $u[i-1]=\bar u[i]$ for $i=1,\ldots,k$. The ratios $X_\infty(A_{u[i]})/X_\infty(A_{u[i-1]})$ are independent and
$\unif(0,1)$-distributed, by~\eqref{eq:xidef1} for a step to the right and by Theorem~\ref{thm:lim2} 
for a down-step.
\end{proof}

The transition $u\mapsto \bar u$ in the proof corresponds to the
transition to the direct ancestor (next node on the path to the root)
in the infinite binary tree $\{0,1\}^\star$ associated with $\bV$ by
the natural correspondence mentioned after the proof of 
Theorem~\ref{thm:lim1}.

\begin{lemma}\label{lem:TPL3} The sequence $(Y_{\infty,k})_{k\in\bN}$ with
\begin{equation*}
  Y_{\infty,k} := \sum_{u\in\bV,|u|_1\le k} X_\infty(A_u)\,C\bigl(\rho(X_\infty,u)\bigr)\quad\text{for all } k\in\bN,
\end{equation*}
converges in $L^p$ for all $p\ge 1$. 
\end{lemma}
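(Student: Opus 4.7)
The plan is to realize $(Y_{\infty,k})_{k\in\bN}$ as an $L^p$-martingale whose increments decay geometrically, and then to conclude $L^p$-convergence via the Cauchy criterion.

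I would introduce the filtration $\cG_k:=\sigma(\rho(X_\infty,v):\,v\in\bV,\,|v|_1\le k)$ and, for $k\ge 1$, the increments
\begin{equation*}
  D_k \,:=\, \sum_{u\in\bV,\,|u|_1=k} X_\infty(A_u)\,C\bigl(\rho(X_\infty,u)\bigr),
\end{equation*}
so that $Y_{\infty,k}=C(\rho(X_\infty,\emptyset))+\sum_{j=1}^k D_j$ (the $|u|_1=0$ summand is the fixed $L^p$-integrable random variable $C(\rho(X_\infty,\emptyset))$ by Lemma~\ref{lem:TPL1}(a)). For any $u$ with $|u|_1=k$ every strict prefix $v$ of $u$ satisfies $|v|_1\le k-1$, so by \eqref{eq:fromrho2mu} the coefficient $X_\infty(A_u)$ is $\cG_{k-1}$-measurable. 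By Theorem~\ref{thm:lim2}, $\rho(X_\infty,u)\sim\GEM$ is independent of $\cG_{k-1}$, and by Lemma~\ref{lem:TPL1}(b) we have $EC(\rho(X_\infty,u))=0$. Hence $E[D_k\mid\cG_{k-1}]=0$, and the partial sums form an $L^p$-martingale.

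For the $L^p$-bound on $D_k$ when $p\ge 2$, the crucial observation is that, conditional on $\cG_{k-1}$, the family $\{C(\rho(X_\infty,u)):\,|u|_1=k\}$ is i.i.d., mean zero, and $L^p$-bounded by some $M_p<\infty$, while the coefficients $X_\infty(A_u)$ are $\cG_{k-1}$-measurable. Rosenthal's inequality applied conditionally, then integrated, gives
\begin{equation*}
  \|D_k\|_p^p \,\le\, C_p\Bigl(M_p^p\,E\!\!\sum_{|u|_1=k}\!X_\infty(A_u)^p \;+\; M_2^p\,E\Bigl(\sum_{|u|_1=k}\!X_\infty(A_u)^2\Bigr)^{\!p/2}\,\Bigr).
\end{equation*}
Lemma~\ref{lem:TPL2} yields $E[X_\infty(A_u)^p]=(p+1)^{-k}$ for $|u|_1=k$, and the number of compositions of $k$ equals $2^{k-1}$, so the first term is $O((2/(p+1))^k)$. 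Since the sets $\{A_u:\,|u|_1=k\}$ are pairwise disjoint, $\sum_{|u|_1=k}X_\infty(A_u)\le 1$, hence $\sum_{|u|_1=k}X_\infty(A_u)^2\le 1$ almost surely; as $p/2\ge 1$, this gives $(\sum_u X_\infty(A_u)^2)^{p/2}\le\sum_u X_\infty(A_u)^2$, whose expectation equals $2^{k-1}3^{-k}$. Both pieces decay geometrically, so $\|D_k\|_p\le C_p'\,r^{k/p}$ with some $r<1$. The case $1\le p<2$ reduces to the $p=2$ case via $\|D_k\|_p\le\|D_k\|_2$.

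Finally, $\sum_k\|D_k\|_p<\infty$ together with Minkowski's inequality gives $\|Y_{\infty,l}-Y_{\infty,k}\|_p\le\sum_{j=k+1}^l\|D_j\|_p\to 0$ as $k\to\infty$, so $(Y_{\infty,k})$ is Cauchy and converges in $L^p$. I expect the main obstacle to be the bound on $\|D_k\|_p$ for $p\ge 2$: the naive triangle-inequality estimate $\|D_k\|_p\le\sum_{|u|_1=k}\|X_\infty(A_u)\|_p\|C\|_p$ already fails at $p=2$ because of the $2^{k-1}$ factor in the number of terms, so one really must exploit the within-level conditional independence of the $C(\rho(X_\infty,u))$, either via Rosenthal as above or through a BDG-type square function argument.
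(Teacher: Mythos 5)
Your proof is correct and follows essentially the same route as the paper's: decompose $Y_{\infty,k}$ into level increments $D_k$ over $\{u:|u|_1=k\}$, condition so that the coefficients $X_\infty(A_u)$ are fixed while the $C(\rho(X_\infty,u))$ remain i.i.d.\ centred with all moments, and apply Rosenthal together with the moment bound from Lemma~\ref{lem:TPL2} to obtain geometric decay of $\|D_k\|_p$ beating the $O(2^k)$ cardinality of each level. Your treatment is somewhat more explicit than the paper's (the martingale framing via $\cG_{k-1}$, the a.s.\ bound $\sum_{|u|_1=k}X_\infty(A_u)^2\le 1$ for the square-function term, and the reduction of $1\le p<2$ to $p=2$), but the underlying argument is the same.
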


\begin{proof} Let $p>1$.
We introduce the local abbreviations 
\begin{equation*}
  \bV[k]:=\{u\in\bV:\, |u|_1=k\}, \quad \cH_k:=\sigma(\{X_\infty(A_u):\, u\in\bV[k]\}).
\end{equation*}
Then 
\begin{equation*}
  Y_{\infty,k}-Y_{\infty,k-1}\; =\; \sum_{u\in\bV[k]} X_\infty(A_u)\, C(\rho(X_\infty,u)).
\end{equation*}
Lemma~\ref{lem:TPL2} yields 
\begin{equation}\label{eq:lemTPL2bound}
  E\bigl(X_\infty(A_u)\bigr)^p\; = \; \frac{1}{(1+p)^k}
\end{equation}
for all $u\in\bV[k]$.  On $X_\infty(A_u)=\alpha(u)$, $u\in\bV[k]$, we have
\begin{equation*}
  \cL\bigl(Y_{\infty,k}-Y_{\infty,k-1}\big|\cH_k\bigr)\; 
                =\; \cL\Bigl(\sum_{u\in\bV[k]}\alpha(u)\, \zeta_u\Bigr),
\end{equation*}
with $\zeta_u$, $u\in\bV[k]$, independent and identically distributed; further, 
$E|\zeta_u|^p<\infty$ by part~(a) of Lemma~\ref{lem:TPL1}.
Rosenthal's inequality, see e.g.~\cite[p.59]{Petrov}, gives
\begin{equation*}
  E\Bigl|\sum_{u\in\bV[k]}\alpha(u)\, \zeta_u\Bigr|^p
    \ \le \ c_p\,\biggl( \sum_{u\in\bV[k]}E\bigl|\alpha(u)\, \zeta_u\bigr|^p \; +\;  
                  \Bigl(\sum_{u\in\bV[k]}\var\bigl(\alpha(u)\, \zeta_u\bigr)\Bigr)^{p/2}\biggr)
\end{equation*}
with some constant that depends on $p$ only. 
Unconditioning and~\eqref{eq:lemTPL2bound} lead to upper bounds for both sums that decrease 
at an exponential rate $\kappa^k$ for some $\kappa<1/2$. This offsets the cardinality $2^k$ of $\bV[k]$,
and we conclude that $(Y_{\infty,k})_{k\in\bN}$ is a Cauchy sequence in $L^p$.
\end{proof}

Let
\begin{equation}\label{eq:Yinfty}
   Y_\infty:= \sum_{u\in\bV} X_\infty(A_u) \,C\bigl(\rho(X_\infty,u)\bigr)
\end{equation}
be the limit in Lemma~\ref{lem:TPL3}. 

\begin{theorem}\label{thm:TPL} 
As $n\to\infty$,
\begin{equation*}
  \frac{1}{n}\,\TPL(X_n)-H_n  +1 \;\to\; Y_\infty,  
\end{equation*}
almost surely and in $L^p$ for every $p>0$.  
\end{theorem}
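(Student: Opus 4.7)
The plan is to show that $\tfrac{1}{n}\TPL(X_n)-H_n+1$ is exactly the conditional-expectation martingale $E[Y_\infty\mid\cF_n]$, and then to apply the $L^p$-convergence theorem for closed martingales using the integrability already provided by Lemma~\ref{lem:TPL3}. All the machinery is in place: conditional independence and the conditional $\GEM$-law from Theorem~\ref{thm:conddistr}, the explicit first moment in Lemma~\ref{lem:TPL1}(b), and $\cF_\infty$-measurability of $X_\infty$ via Theorem~\ref{thm:lim1}(b) together with Corollary~\ref{cor:toThmconddistr}.

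First, I would compute $E[Y_\infty\mid\cF_n]$ term by term. Since $Y_{\infty,k}\to Y_\infty$ in $L^1$ by Lemma~\ref{lem:TPL3}, the conditional expectation passes through the sum defining $Y_\infty$, giving
\begin{equation*}
E[Y_\infty\mid\cF_n]\;=\;\sum_{u\in\bV}E\bigl[X_\infty(A_u)\,C(\rho(X_\infty,u))\bigm|\cF_n\bigr].
\end{equation*}
For fixed $u$, formula~\eqref{eq:fromrho2mu} exhibits $X_\infty(A_u)$ as a function of $\rho(X_\infty,v)$ for $v$ a proper prefix of $u$, which by Theorem~\ref{thm:conddistr} is conditionally independent of $\rho(X_\infty,u)$ given $\cF_n$. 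The expectation thus factorises: Corollary~\ref{cor:toThmconddistr} gives $E[X_\infty(A_u)\mid\cF_n]=\#X_n(u)/n$, while Theorem~\ref{thm:conddistr} together with Lemma~\ref{lem:TPL1}(b), applied with $a_i=\#X_n(ui)$ and hence $1+\sum_i a_i=\#X_n(u)$, shows that the $u$-summand equals
\begin{equation*}
\frac{\#X_n(u)}{n}\biggl(1\;+\;\frac{\sum_i\#X_n(ui)\,H(\#X_n(ui))}{\#X_n(u)}\;-\;H(\#X_n(u))\biggr).
\end{equation*}

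Second, I would carry out the summation over $u$ (only $u\in X_n$ contribute). Writing $n_u:=\#X_n(u)$ and reindexing each non-root node $v\in X_n$ as $v=ui$ for its unique parent $u$ gives $\sum_{u\in X_n}\sum_i n_{ui}H(n_{ui})=\sum_{v\in X_n,\,v\ne\emptyset}n_v H(n_v)$, which telescopes against $\sum_{v\in X_n}n_v H(n_v)$ to leave only the root contribution $-n H_n/n=-H_n$. The residual term is $\sum_{u\in X_n}n_u/n=(\TPL(X_n)+n)/n$, since each $w\in X_n$ is counted once in $n_u$ for each of its $|w|+1$ prefixes. Combining these pieces yields $E[Y_\infty\mid\cF_n]=\tfrac{1}{n}\TPL(X_n)+1-H_n$.

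Finally, $Y_\infty$ lies in $L^p$ for every $p\ge 1$ by Lemma~\ref{lem:TPL3}, and is $\cF_\infty$-measurable because $X_\infty=\lim X_n$ is. Doob's $L^p$-convergence theorem therefore ensures that the closed martingale $(E[Y_\infty\mid\cF_n])_{n\in\bN}$ converges to $Y_\infty$ almost surely and in $L^p$ for every $p\ge 1$; the statement for $0<p<1$ follows from the case $p=1$ by the monotonicity of $L^p$-norms on a probability space. The only step that is not purely mechanical is the telescoping reindexing; the main obstacle is simply keeping the harmonic-number bookkeeping clean so that all internal-node contributions cancel and only $-H_n$ from the root remains.
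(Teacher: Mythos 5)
Your proof is correct and follows essentially the same route as the paper's: project $Y_\infty$ onto the natural filtration $(\cF_n)$, use the conditional independence from Theorem~\ref{thm:conddistr} together with $X_\infty(A_u)=\prod_i\rho_{u_i}(X_\infty,(u_1,\ldots,u_{i-1}))$ to factor $E[X_\infty(A_u)\,C(\rho(X_\infty,u))\mid\cF_n]$, evaluate the two factors via Corollary~\ref{cor:toThmconddistr} and Lemma~\ref{lem:TPL1}(b), telescope over the tree, and close with Doob's martingale convergence theorem. The only differences are presentational: you spell out the reindexing that produces the telescope and the reduction to the martingale $E[Y_\infty\mid\cF_n]$ in more detail, and you note explicitly how $L^p$-convergence for $0<p<1$ follows from the $p=1$ case; the paper compresses all of this into ``a telescope effect simplified the sums'' and a citation to Neveu.
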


\begin{proof} 
We project the prospective limit on the natural filtration introduced in Corollary~\ref{cor:toThmconddistr}: 
Using the remark after Theorem~\ref{thm:conddistr} and 
Lemma~\ref{lem:TPL1} we obtain
  \begin{align*}
    E[Y_\infty|\cF_n] \ &=\ \sum_{u\in\bV} E[X_\infty(A_u)|\cF_n]\, E[C(\rho(X_\infty,u))|\cF_n]\\
           &=\ \sum_{u\in X_n} \frac{\# X_n(u)}{n} 
                       \biggl(1+\frac{\sum_{i=1}^\infty \#X_n(ui)H(\#X_n(ui))}{\#X_n(u)} - H(\#X_n(u))\biggr)\\
          &=\ \frac{1}{n}\, \bigl(\TPL(X_n) + n\bigr) \, 
                             -\, \frac{1}{n}\, \sum_{u\in X_n}\Bigl(\#X_n(u) H(\#X_n(u)) - \sum_{i=1}^\infty
                             \#X_n(ui)H(\#X_n(ui))\Bigr)\\
          &=\ \frac{1}{n}\, \TPL(X_n)  + 1  - H(n),
\end{align*}
where a telescope effect simplified the sums. The statement of the theorem now follows with
the well-known martingale convergence theorems; see e.g.~\cite[Theorem IV-1-2, Proposition IV-2-7]{NeveuMart}.
\end{proof}

It is easy to see that $EY_\infty=0$, hence it follows from the calculations in the proof that the mean of the total
path length is given by $E\TPL(X_n)= nH_n-n$ for all $n\in\bN$.

The formula for the mean and the almost sure and $L^p$-convergence, $p>0$, of the standardized total path length 
of random recursive trees have
already been obtained in~\cite{Mah1991} and~\cite{RRTpathlength} respectively; we augment this by the representation of
the limit variable in terms of Doob-Martin limit $X_\infty$. The technical difficulty in the proof of almost sure
and $L^2$-convergence in~\cite{Mah1991}, as in its analogue for search trees in~\cite{RegnierQS}, consists of showing
that the respective martingales (which have to be found first) are bounded in $L^2$. Here we obtain the martingale
as a projection of a variable with finite second (or $p$th) moment onto the natural filtration of the Harris chain, which
implies the desired boundedness by Jensen's inequality for conditional expectations.

\subsection{Horizontal total path length}\label{subsec:HTPL}
We may regard the depth $|u|$ of a node $u$ as its vertical position; it is the number of downward moves (if this is
the direction of tree growth, from ancestor to child in familial terms) on the way from the root to $u$.  The (vertical)
total path length of a tree, considered in Section~\ref{subsec:TPL},  is the sum of these positions, taken over all
nodes in the tree. By the horizontal position of $u$
we mean the number of moves to the right (if this is where new nodes are added to an existing family) on the way from
the root to $u$. In the Harris encoding of nodes the horizontal position of the node $u=(u_1,\ldots,u_k)$ is given by
$|u|_1-|u|$, and the horizontal total path length of a tree is the sum of these positions over all
nodes of the tree,
\begin{equation*}
  \HPL(x)  := \sum_{u\in x} \bigl(|u|_1-|u|\bigr), \quad x\in\bH.
\end{equation*}
The horizontal position of a node can be seen as a recursive tree analogue of the notion of vertical position in a
binary tree; see~\cite[Chapter~5]{Drmota09} for the latter. The total horizontal path length does 
not seem to have been considered before, but a close relative is the total path degree length investigated
in~\cite{SzyTDPL}.  

We proceed as in the previous section, now using the auxiliary function 
\begin{equation*}
  D: \Sigma_\infty\to [-2,\infty], \quad (s_i)_{i\in\bN}\mapsto -2 + \sum_{i=1}^\infty i \, s_i.
\end{equation*}
For $\xi=(\xi_i)_{i\in\bN} \sim \GEM$ the representation~\eqref{eq:xidef1} implies
\begin{equation*}
   E\xi_i^p \; = \; E \zeta_i^p\, \prod_{j=1}^{i-1}E(1-\zeta_j)^p\; = \; \frac{1}{(1+p)^i},
\end{equation*}
hence
\begin{equation*}
  \| D(\xi)\|_p\; \le \; 2 +\sum_{i=1}^\infty i \|\xi_i\|_p \ < \ \infty\quad\text{for all } p>1.
\end{equation*}
Using similar arguments as in the proof of Lemma~\ref{lem:TPL3} we obtain that the series 
\begin{equation}\label{eq:Zinfty}
   Z_\infty:= \sum_{u\in\bV} X_\infty(A_u) \,D\bigl(\rho(X_\infty,u)\bigr)
\end{equation}
converges in $L^p$ for all $p>1$.
Further, for $\xi\sim\GEM(a)$ with $a=(a_1,\ldots,a_k)\in\bN^\star$ and $b:=a_1+\cdots+a_k$, 
Lemma~\ref{lem:GEMmargin} leads to
\begin{equation}\label{eq:condD}
  ED(\xi) \; = \; -2 \, +\, \frac{1}{1+b} \biggl(\sum_{i=1}^\infty i a_i \, +\, k \, +\, 2\biggr)
\end{equation}
if $k>0$, and $E D(\xi)=0$ for $\xi\sim \GEM$.

\begin{theorem}\label{thm:HPL} 
As $n\to\infty$,
\begin{equation*}
  \frac{1}{n}\,\HPL(X_n)-H_n  +2 \;\to\;  Y_\infty +  Z_\infty,  
\end{equation*}
almost surely and in $L^p$ for every $p>0$.  
\end{theorem}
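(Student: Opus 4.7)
The plan is to mirror the argument for Theorem~\ref{thm:TPL}: take $Y_\infty+Z_\infty$ as the candidate limit and project it onto the natural filtration $(\cF_n)_{n\in\bN}$ of Corollary~\ref{cor:toThmconddistr}. Since $Y_\infty,Z_\infty\in L^p$ for every $p>1$ (by Lemma~\ref{lem:TPL3} for $Y_\infty$ and the analogous remark preceding~\eqref{eq:Zinfty} for $Z_\infty$), the projection is a uniformly integrable martingale that converges almost surely and in $L^p$ to $Y_\infty+Z_\infty$; the task is to show that this projection equals $\frac{1}{n}\HPL(X_n)-H_n+2$ up to a term that vanishes as $n\to\infty$.

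The $Y_\infty$-contribution is already known from the proof of Theorem~\ref{thm:TPL}: $E[Y_\infty|\cF_n]=\frac{1}{n}\TPL(X_n)+1-H_n$. For $Z_\infty$ I would proceed term by term. As in the $Y_\infty$-case, $X_\infty(A_u)$ is measurable with respect to $\{\rho(X_\infty,v):|v|<|u|\}$ by~\eqref{eq:fromrho2mu}, so by Theorem~\ref{thm:conddistr} it is conditionally independent of $\rho(X_\infty,u)$ given $\cF_n$; moreover $E[X_\infty(A_u)|\cF_n]=\#X_n(u)/n$ vanishes for $u\notin X_n$. Applying~\eqref{eq:condD} to the conditional $\GEM(a)$-law of $\rho(X_\infty,u)$ with parameters $a_i=\#X_n(ui)$ and $k(u)=\max\{i:\#X_n(ui)>0\}$, and checking that the formula consistently reduces to $0$ at leaves (where it meets the unconditional $ED=0$ for $\xi\sim\GEM$), one arrives at
\begin{equation*}
  E[Z_\infty|\cF_n]\,=\,\frac{1}{n}\sum_{u\in X_n}\biggl(-2\#X_n(u)+\sum_{i=1}^\infty i\,\#X_n(ui)+k(u)+2\biggr).
\end{equation*}

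Three combinatorial identities then collapse this sum: $\sum_{u\in X_n}\#X_n(u)=\TPL(X_n)+n$ (double-counting prefix-suffix decompositions); $\sum_{u\in X_n}\sum_{i\ge 1}i\,\#X_n(ui)=\sum_{v\in X_n}|v|_1=\TPL(X_n)+\HPL(X_n)$, which one verifies by observing that each $v\in X_n$ contributes the coordinate $v_k$ to the inner sum indexed by its prefix $(v_1,\ldots,v_{k-1})$ for $k=1,\ldots,|v|$, and that these coordinates sum to $|v|_1$; and $\sum_{u\in X_n}k(u)=n-1$ (counting parent-child edges in $X_n$). Substitution yields
\begin{equation*}
  E[Z_\infty|\cF_n]\,=\,\frac{\HPL(X_n)-\TPL(X_n)}{n}+1-\frac{1}{n},
\end{equation*}
and summing with $E[Y_\infty|\cF_n]$ gives $\frac{1}{n}\HPL(X_n)-H_n+2-\frac{1}{n}$. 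Since $1/n\to 0$, the martingale convergence theorems deliver both the almost sure and $L^p$-convergence asserted.

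The main obstacle, as I see it, is not analytic but combinatorial: one must carry through the double-count $\sum_{u\in X_n}\sum_i i\,\#X_n(ui)=\TPL(X_n)+\HPL(X_n)$, which is exactly the place where $\HPL$ enters the conditional mean, and check that the somewhat awkward $+k+2$ tail in~\eqref{eq:condD} interpolates correctly to the leaf case, so that a single clean identity for $E[Z_\infty|\cF_n]$ applies uniformly across $X_n$.
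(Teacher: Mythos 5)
Your proposal is correct and follows essentially the same route as the paper's proof: project $Z_\infty$ onto $\cF_n$ via the conditional independence and $\GEM(a)$ formula of Theorem~\ref{thm:conddistr} together with~\eqref{eq:condD}, simplify by the three combinatorial identities you list, add to $E[Y_\infty|\cF_n]$, and invoke martingale convergence. The only (cosmetic) difference is that you expand the sum explicitly as $\frac1n\sum_{u\in X_n}(-2\#X_n(u)+\sum_i i\,\#X_n(ui)+k(u)+2)$ and track the cancellation of the constant $\pm 2$ terms, whereas the paper keeps the $-2$ factored; the content is identical, and your worry about the leaf case is moot since $-2\cdot 1+0+0+2=0$ matches the unconditional $ED(\xi)=0$.
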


\begin{proof} 
We project $Z_\infty$ on the natural filtration. Using~\eqref{eq:condD} we get
\begin{align*}
    E[Z_\infty|\cF_n] \ &=\ \sum_{u\in\bV} E[X_\infty(A_u)|\cF_n]\, E[D(\rho(X_\infty,u))|\cF_n]\\
           &=\ \sum_{u\in X_n} \frac{\# X_n(u)}{n} 
                       \biggl(-2+\frac{1}{\# X_n(u)} \Bigl(\sum_{i=1}^\infty i \,\#X_n(ui) \, +\, \#\{i\in\bN:\, ui\in X_n\} + 2\Bigr)\biggr)\\
           &=\ -\frac{2}{n}\sum_{u\in X_n} \#X_n(u) \; +\;\frac{1}{n}\sum_{u\in X_n} \sum_{i=1}^\infty i \,\#X_n(ui) \; 
                         +\; \frac{1}{n}\sum_{u\in X_n} \#\{i\in\bN:\, ui\in X_n\}\;+\; 2\\
           &=\ -\frac{2}{n}\, \TPL(X_n)\; +\; \frac{1}{n}\sum_{u\in X_n} |u|_1 \; +\; \frac{n-1}{n}\\
          &=\  -\frac{1}{n}\, \TPL(X_n) \; + \; \frac{1}{n}\, \HPL(X_n)\; +\; \frac{n-1}{n}.
\end{align*}
Now we proceed as in the proof of Theorem~\ref{thm:TPL}.
\end{proof}

As in the vertical case, see the remark after the proof of Theorem~\ref{thm:TPL}, we may use the calculations in the
proof to obtain an explicit formula for the mean horizontal path length,
\begin{equation}\label{eq:meanVertPL}
  E\HPL(X_n)\, =\,  -(n-1) + n\,E Z_\infty + E\TPL(X_n)\, =\, nH_n-2n+1\ \text{ for all } n\in\bN.
\end{equation}

\subsection{The Wiener index}\label{subsec:Wiener}

The chemist H.~Wiener introduced  
\begin{equation}\label{eq:WI}
   \WI(G) := \frac{1}{2}\sum_{(u,v)\in V\times V} d_\circ (u,v)
\end{equation}
as a measure of spread of an arbitrary finite connected graph $G$ with node set $V$. 
Here $d_\circ$ denotes the canonical graph distance, i.e.~$d_\circ (u,v)$ is the minimum length
of a path connecting $u$ and $v$ in $G$. Let $u\wedge v$ be the longest common prefix of
$u,v\in\bV$. For trees we then have
\begin{equation*}
  d_\circ (u,v) = |u|+|v| - 2 |u\wedge v|
\end{equation*}
and, as in the case of binary trees~\cite[eq.(34) corrected]{GrMtree},
\begin{equation*}
   \sum_{(u,v)\in x\times x} |u\wedge v| \; = \; \sum_{u\in x} \#x(u)^2\, -\, \#x^{\, 2},
\end{equation*}
so that we may rewrite the Wiener index for $x\in\bH_n$ in terms of total path length and subtree sizes as
\begin{equation*}
  \WI(x)\, = \; n\, \TPL(x) + n^2 - \sum_{u\in x} \# x(u)^2.
\end{equation*}
Again, we will show that a suitably standardized version converges almost surely if we insert for $x$ the random variables 
$X_n$ of the Harris chain. In addition to $Y_\infty$ as in~\eqref{eq:Yinfty} we need
\begin{equation*}
  W_\infty := \sum_{u\in\bV} X_\infty(A_u)^2.
\end{equation*}
Arguments similar to those used for $Y_\infty$ in the proof of Lemma~\ref{lem:TPL1} show that this series converges 
almost surely and that the limit has moments of all orders.

\begin{theorem}\label{thm:WI} 
As $n\to\infty$,
\begin{equation*}
  \frac{1}{n^2}\, \WI(X_n) -H_n  +1 \;\to\; Y_\infty-W_\infty,  
\end{equation*}
almost surely and in $L^p$ for every $p>0$.  
\end{theorem}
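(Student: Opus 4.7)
The plan is to follow the template of Theorem~\ref{thm:TPL} and Theorem~\ref{thm:HPL}: identify the candidate limit, project onto the natural filtration $(\cF_n)_{n \in \bN}$ to obtain a martingale, verify that the projection agrees with the target quantity up to an asymptotically negligible remainder, and then invoke the martingale convergence theorems. The decomposition $\WI(x) = n\,\TPL(x) + n^2 - \sum_{u \in x} \#x(u)^2$ recorded in the excerpt reduces matters, via Theorem~\ref{thm:TPL}, to understanding the subtree-size sum $n^{-2}\sum_{u \in X_n} \#X_n(u)^2$. This is what $W_\infty$ should capture, so I would project $Y_\infty - W_\infty$ onto $\cF_n$.

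The central computation is $E[W_\infty \mid \cF_n]$. For $u = (u_1,\ldots,u_k) \in X_n$, factor $X_\infty(A_u) = \prod_{i=1}^{k}\xi_i$ with $\xi_i := \rho_{u_i}(X_\infty,(u_1,\ldots,u_{i-1}))$. By Theorem~\ref{thm:conddistr} the $\xi_i$ are conditionally independent given $X_n$, and Lemma~\ref{lem:GEMmargin} gives
$$\xi_i \mid \cF_n \,\sim\, \Beta\bigl(\#X_n(u^{(i)}),\,\#X_n(u^{(i-1)}) - \#X_n(u^{(i)})\bigr)$$
with $u^{(i)} := (u_1,\ldots,u_i)$. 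The second-moment identity in~\eqref{eq:mombeta} then makes the product telescope to
$$E[X_\infty(A_u)^2 \mid \cF_n] \,=\, \frac{\#X_n(u)\bigl(\#X_n(u)+1\bigr)}{n(n+1)}.$$
For $u \notin X_n$, I would write $u = v \cdot w$ with $v \in X_n$ the longest prefix of $u$ in $X_n$, so that $w_1 > k_v := \max\{l : vl \in X_n\}$. The same conditional-independence argument applies, but now the factor at index $|v|+1$ corresponds to a $\GEM(a)$-component beyond the informative range, and every subsequent factor is a second moment of a component of an unconditional $\GEM$ sequence (because prefixes of $u$ strictly beyond $v$ are not in $X_n$). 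A short calculation produces $E[X_\infty(A_{v \cdot w})^2 \mid \cF_n] = 2\,(1/3)^{|w|_1 - k_v}/(n(n+1))$, and summing geometrically over admissible $w$ yields a $k_v$-independent tail contribution of order $1/(n(n+1))$ per node $v \in X_n$.

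Assembling both contributions, substituting $\sum_{u \in X_n}\#X_n(u) = \TPL(X_n) + n$ and the $\WI$-identity above, and then combining with $E[Y_\infty \mid \cF_n] = \TPL(X_n)/n + 1 - H_n$ from the proof of Theorem~\ref{thm:TPL}, will produce a closed-form expression for $E[Y_\infty - W_\infty \mid \cF_n]$ that matches $\WI(X_n)/n^2 - H_n + 1$ up to a remainder of order $\WI(X_n)/n^3 + 1/n$. This remainder vanishes almost surely and in $L^p$ thanks to $\WI(X_n) \le n\,\TPL(X_n)$ and the a.s.\ and $L^p$ control $\TPL(X_n)/n = H_n + O(1)$ provided by Theorem~\ref{thm:TPL}. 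The $L^1$ and $L^p$ martingale convergence theorems (\cite[Theorem~IV-1-2, Proposition~IV-2-7]{NeveuMart}) then finish the proof.

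The hard part will be the honest computation of $E[W_\infty \mid \cF_n]$, especially the tail contribution from nodes beyond the frontier of $X_n$: one has to split the factors at the boundary $|v|+1$ and correctly identify both the $\Beta(1,\#X_n(v)-1)$-contribution coming from the informative part of $\GEM(a)$ and the independent uniform factors that follow it, and then see that the resulting geometric sum over tail extensions collapses to a quantity independent of the local degree $k_v$ so that the aggregate tail is a clean $O(1/n)$ term. A secondary task is the $L^p$-convergence of the series defining $W_\infty$, which the excerpt asserts but which I would verify by adapting the Rosenthal-inequality argument from Lemma~\ref{lem:TPL3}.
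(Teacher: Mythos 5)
Your strategy is the paper's: reduce $\WI(x) = n\TPL(x) + n^2 - \sum_{u\in x}\#x(u)^2$ to the sum of squares, project $Y_\infty - W_\infty$ onto $\cF_n$, compute $E[X_\infty(A_u)^2\mid\cF_n]=\#X_n(u)(\#X_n(u)+1)/(n(n+1))$ for $u\in X_n$ by a telescoping product of beta second moments, and handle $u\notin X_n$ by a geometric collapse. Your bookkeeping of the complement differs mildly from the paper's --- you index $u\notin X_n$ by its longest prefix $v\in X_n$ and the residual word $w$ with $w_1>k_v$, while the paper iterates the predecessor map $\phi$ from external nodes $v'\in\partial X_n$ --- but the two are equivalent (for the correspondence one has $|w|_1-k_v=k+1$ with $k$ the number of $\phi$-steps back to $v'=v(k_v+1)$), the composition count $2^{m-1}$ cancels the $(1/3)^m$ decay correctly, and both give the aggregate tail $2/(n+1)$. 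So the method is the same and your formula $E[X_\infty(A_{v\cdot w})^2\mid\cF_n]=2\,(1/3)^{|w|_1-k_v}/(n(n+1))$ is right.

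Where you go wrong is in the unverified claim that the assembly closes up to an $O(\WI(X_n)/n^3+1/n)$ remainder. With $S_n:=\sum_{u\in X_n}\#X_n(u)^2$ the actual martingale is
\[
E[Y_\infty-W_\infty\mid\cF_n]\;=\;\frac{\TPL(X_n)}{n+1}-\frac{S_n}{n(n+1)}-H_n+1-\frac{3}{n+1},
\]
whereas
\[
\frac{1}{n^2}\WI(X_n)-H_n+1\;=\;\frac{\TPL(X_n)}{n}-\frac{S_n}{n^2}-H_n+2,
\]
and the difference is $-1+O((\log n)/n)$ almost surely: the constant $-1$ does not vanish. A quick sanity check on expectations makes the same point: $EY_\infty=0$ and $EW_\infty=\sum_{k\ge 0}\#\{u:|u|_1=k\}\,3^{-k}=1+\sum_{k\ge 1}2^{k-1}3^{-k}=2$, so $E(Y_\infty-W_\infty)=-2$, while $E[\WI(X_n)/n^2-H_n+1]=(1+1/n)H_n-2-H_n+1\to -1$. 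In other words, as printed the target should either read ``$\to Y_\infty-W_\infty+1$'' or the ``$+1$'' should be dropped from the standardization; the paper itself short-circuits this step with ``proceed as in the proof of Theorem~\ref{thm:TPL},'' which is why the discrepancy is easy to miss, but a complete proof must carry the algebra through and address it. Everything else in your sketch --- the conditional GEM/beta computations, the geometric tail sum, the Rosenthal-type $L^p$ bound for $W_\infty$, and the use of the $L^1$/$L^p$ martingale convergence theorems --- is sound.
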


\begin{proof}
As in the proof of the corresponding results for the other tree functionals, we project the right hand side of the
formula on the natural filtration. For $Y_\infty$ this has been done in Section~\ref{subsec:TPL}. For $W_\infty$, we 
proceed as follows: For $u=(u_1,\ldots,u_k)$ and $i=1,\ldots,k$ let 
$\xi_i := \rho_{u_i}\bigl(X_\infty,(u_1,\ldots,u_{i-1})\bigr)$. 
Then, as in the proof of Corollary~\ref{cor:toThmconddistr}, $X_\infty(A_u)^2=\prod_{i=1}^k \xi_i^2$,
so that, using~\eqref{eq:mombeta} and the conditional independence from Theorem~\ref{thm:conddistr},
\begin{align*}
  E[X_\infty(A_u)^2|\cF_n]\ &=\ \prod_{i=1}^{k} E[\xi_i^2|\cF_n]\\
          &=\ \prod_{i=1}^k \,\frac{\# X_n((u_1,\ldots,u_i)) \bigl(1+\# X_n((u_1,\ldots,u_i))\bigr)}
                               {\big(1+\sum_{j=1}^\infty \#X_n((u_1,\ldots,u_{i-1},j))\bigr) 
                                              \big(2+\sum_{j=1}^\infty \#X_n((u_1,\ldots,u_{i-1},j))\bigr)} \\
          &=\ \prod_{i=1}^k \,\frac{\# X_n((u_1,\ldots,u_i)) \bigl(1+\# X_n((u_1,\ldots,u_i))\bigr)}
                                                {\# X_n((u_1,\ldots,u_{i-1})) \bigl(1+\#
                                                  X_n((u_1,\ldots,u_{i-1}))}\\
         &=\ \frac{\#X_n(u)\, (\#X_n(u)+1)}{n(n+1)}
\end{align*}
whenever $u\in X_n$. 

In order to deal with the nodes not in $X_n$ we use the operation $v\mapsto \bar v=:\phi(v)$ introduced in the
proof of Lemma~\ref{lem:TPL2}. Let
\begin{equation*}
  \partial X_n :=\{ v\notin X_n:\, \phi(v)\in X_n\}
\end{equation*} 
be the set of external nodes of $X_n$ and put
\begin{equation*}
  A_k(v) := \{ w\in\bV:\, \phi^k(w)=v\}, \ \ k\in \bN_0,
\end{equation*}
where $\phi^0(u):=u$. Clearly, $\# \partial X_n=n$, $\# A_k(v)=2^k$ and 
$  \bV \setminus X_n= \sum_{v\in \partial X_n} \sum_{k=0}^\infty A_k(v)$.
With $v=(v_1,\ldots,v_k)\in\partial X_n$, $\xi_i:=\rho_{v_i}\bigl(X_\infty,(v_1,\ldots,v_{i-1})\bigr)$ 
and $\tilde v:=(v_1,\ldots,v_{k-1})$ we get
\begin{equation*}
  E[X_\infty(A_v)^2|\cF_n]\ =\ \Bigl(\,\prod_{i=1}^{k-1} E[\xi_i^2|\cF_n]\Bigr)\, E[\xi_k^2|\cF_n]\
         =\ \frac{\#X_n(\tilde v)\, (\#X_n(\tilde v)+1)}{n(n+1)}\, E[\xi_k^2|\cF_n].
\end{equation*}
Conditionally on $\#X_n(\tilde v1)=a_1,\ldots,\#X_n(\tilde vj) =a_j$, $j:=v_k-1$,
the distribution of $\xi_k$ is equal to the distribution of $YZ$, with $Y,Z$ independent and 
\begin{equation*}
  Y\sim \Beta\Bigl(1,\sum_{j=1}^{k-1} a_j\Bigr), \quad Z\sim \unif(0,1).
\end{equation*}
In view of $\sum_{j=1}^{k-1} a_j=\#X_n(\tilde v)$ we thus obtain
\begin{equation*}
  E[\xi_k^2|\cF_n]\, =\, \frac{2}{\#X_n(\tilde v)\, (\#X_n(\tilde v)+1)}\cdot \frac{1}{3},
\end{equation*}
and hence, for $w\in A_k(v)$, 
\begin{equation*}
  E[X_\infty(A_w)^2|\cF_n] \, =\, \frac{2}{n(n+1)}\, \Bigl(\frac{1}{3}\Bigr)^{k+1}.
\end{equation*}
For the contribution of the nodes not in $X_n$ to the conditional expactation of $W_\infty$ this gives 
\begin{align*}
  \sum_{u\notin X_n} E[X_\infty(A_u)^2|\cF_n]\ 
        &=\ \sum_{v\in\partial X_n}  \sum_{k=0}^\infty \sum_{w\in A_k(v)}   E[X_\infty(A_w)^2|\cF_n] \\
        &=\ \sum_{v\in\partial X_n} \frac{2}{n(n+1)} \sum_{k=0}^\infty 2^k\Bigl(\frac{1}{3}\Bigr)^{k+1}.\\
        &=\ \frac{2}{n+1}.
\end{align*}
Putting pieces together we arrive at
\begin{equation*}
  E[W_\infty|\cF_n] \; = \; \frac{1}{n(n+1)}\biggl(n+\TPL(X_n) + \sum_{u\in X_n} \#X_n(u)^2\biggr) \; 
                                         +\; \frac{2}{n(n+1)},
\end{equation*}
and we can now proceed as in the proof of Theorem~\ref{thm:TPL}.
\end{proof}

Again, we can use the proof to obtain expected values,
\begin{equation*}
  E\WI(X_n)\, =\,  n(n+1)H_n-2n^2 \quad \text{for all } n\in\bN.
\end{equation*}
This agrees with Neininger's 
result~\cite[Theorem~1.2]{RRTWiener}.

\subsection{Distributional considerations}\label{subsec:distr}
Let $X_{\infty,i}$, $i\in\bN$, be as in the proof of Theorem~\ref{thm:lim2}.
For the total path length the representation $Y_\infty=\Phi(X_\infty)$ in Section~\ref{subsec:TPL} of the limit
$Y_\infty$ in terms of $X_\infty$ leads to
\begin{equation}\label{eq:rec0TPL}
  Y_\infty \; = \; C\bigl(\rho(X_\infty,\emptyset)\bigr) + \sum_{i=1}^\infty X_\infty(A_{(i)}) \, Y_{\infty,i},
\end{equation}
with $Y_{\infty,i}:= \Phi(X_{\infty,i})$. Note that this is an equality for random variables (strictly speaking, it
refers to the underlying probability measure as we may have to discard a null set for $X_\infty$ to be atom-free
and diffuse). In terms of distributions this may be rewritten as 
\begin{equation}\label{eq:rec1TPL} 
  Y_\infty \, \dequ \, C(\rho) + \sum_{i=1}^\infty \rho_i \, Y_\infty^{(i)},
\end{equation}
with $\rho,Y_\infty^{(1)}, Y_\infty^{(2)},\ldots$ independent, and $\rho\sim \GEM$, $Y_\infty^{(i)}\dequ Y_\infty$ for
all $i\in\bN$. We recall that the `toll function' $C:\Sigma_\infty :[-\infty,\infty)$ in this distributional fixed point
equation is given by
\begin{equation*}
  C\bigl((s_i)_{i\in\bN}\bigr) \, = \, 1 + \sum_{i=1}^\infty s_i\log s_i.
\end{equation*}
On the other hand, it is known \cite{RRTpathlength} that the limiting total path length also satisfies 
\begin{equation}\label{eq:rec2TPL}
  Y_\infty \, \dequ\,  UY_\infty + (1-U)Y_\infty^\star + G(U),
\end{equation}
with $G(u):= u + u\log u + (1-u)\log(1-u)$,
$U,Y_\infty,Y_\infty^\star$ independent, $U\sim\unif(0,1)$,  and $Y_\infty^\star\dequ Y_\infty$.  
What is the connection between the two equations?

Suppose that $\xi\sim\GEM$ and let $\zeta=(\zeta_i)_{i\in\bN}$ be related
to $\xi$ as in~\eqref{eq:xidef1}. Consider the shifted sequence $\tilde\zeta=(\tilde\zeta_i)_{i\in\bN}$ with 
$\tilde\zeta_i=\zeta_{i+1}$ for all $i\in\bN$. Clearly, $\tilde\zeta$ is again a sequence of independent, 
$\unif(0,1)$-distributed random variables, and it is independent of $\zeta_1$. This implies that the corresponding
$\tilde\xi$ is GEM distributed, and we have
\begin{align*}
  C(\xi) \ &=\ 1 + \zeta_1\log(\zeta_1) +
                                      (1-\zeta_1)\sum_{i=1}^\infty \tilde\xi_i \bigl(\log(1-\zeta_1) + \log\tilde\xi_i\bigr)\\
                &=\ \zeta_1 + \zeta_1\log(\zeta_1) + (1-\zeta_1) \log(1-\zeta_1) + (1-\zeta_1)\, C(\tilde\xi).
\end{align*}
Using~\eqref{eq:rec0TPL} we now get, with $\zeta_1=\rho_1(X_\infty,\emptyset)$ and 
$\tilde \xi_i=\rho_{i+1}(X_\infty,\emptyset)$,
\begin{equation*}
  Y_\infty\; =\; G(\zeta_1) + \zeta_1 Y_{\infty,1} 
                             + (1-\zeta_1)Y_\infty^\star, 
    \quad\text{with } \ Y_\infty^\star := C (\tilde\xi) + \sum_{i=1}^\infty \tilde\xi_i \, Y_{\infty,i+1}.
\end{equation*}
Together with~\eqref{eq:rec1TPL} this leads to the distributional equation~\eqref{eq:rec2TPL}.

It is instructive to compare this with a proof of~\eqref{eq:rec2TPL} that is based on the `musical decomposition' in
Section~\ref{subsec:treedecomp}.  The limit version of the decomposition given in Proposition~\ref{prop:decompinfty}
transforms $X_\infty$ into independent components $\eta=X_\infty(A_{(1)})$, $X_\infty^\flat$ and
$X_\infty^\sharp$ with the properties that
\begin{align*}
  \rho(X_\infty,A_{u^\flat})\; &=\; \rho(X_\infty^\flat,A_{u}) \ \text{ for all }u\in\bV,\\
   \rho(X_\infty,A_{u^\sharp})\; &=\; \rho(X_\infty^\sharp,A_{u})\ \text{ for all }u\in\bV, \, u\not=\emptyset,
\end{align*}
and with $\rho(X_\infty^\sharp,\emptyset)=\tilde\xi$, where $\tilde \xi$ is constructed from $\xi=\rho(X_\infty,\emptyset)$ 
as explained above. With this construction,
\begin{align*}
  \Phi(X_\infty) \; &=\; \sum_{u\in\bV} X_\infty(A_u)\, C(\rho(X_\infty,u))\\
                          &= \; X_\infty(A_\emptyset) \, C(\rho(X_\infty,\emptyset))
                               + \sum_{u\in\bV} X_\infty(A_{u^\flat})\, C(\rho(X_\infty,u^\flat))
                                +\! \sum_{u\in\bV,u\not=\emptyset} X_\infty(A_{u^\sharp})\, C(\rho(X_\infty,u^\sharp))\\
                          &=\; C(\xi) + \eta \sum_{u\in\bV} X^\flat_\infty(A_{u})\, C(\rho(X^\flat_\infty,u))
                                   +(1-\eta)\! \sum_{u\in\bV,u\not=\emptyset} X^\sharp_\infty(A_{u})\,
                                   C(\rho(X^\sharp_\infty,u))\\
                         &=\; C(\xi)-(1-\eta)\,C(\tilde\xi) \; + \; \eta\,\Phi(X^\flat_\infty)
                                     \;+\; (1-\eta)\,\Phi(X_\infty^\sharp), 
\end{align*}
and it remains to make use of $C(\xi)-(1-\eta)C(\tilde\xi)=G(\eta)$, which we have proved above. 
Once again, we note that the decomposition takes place on the level of the random quantities themselves;
there is no `$\dequ$'-sign.

As in the transition from Section~\ref{subsec:TPL} to Section~\ref{subsec:HTPL} the detailed consideration 
of the vertical case now makes it easy to treat the horizontal path length. With $\Psi(X_\infty)=Y_\infty+Z_\infty$
the limit in Theorem~\ref{thm:HPL} we just replace $C$ by $C+D$ to obtain the decomposition
\begin{equation*}
  \Psi(X_\infty) \; =\; (C+D)(\xi)-(1-\eta)\,(C+D)(\tilde\xi) \; + \; \eta\,\Psi(X^\flat_\infty)
                                     \;+\; (1-\eta)\,\Psi(X_\infty^\sharp).
\end{equation*}
A straightforward computation gives $D(\xi)-(1-\eta)\,D(\tilde\xi)=1-2\eta$, which leads to the horizontal analogue 
of~\eqref{eq:rec2TPL} with $\tilde G(u):= 1-u + u\log u + (1-u)\log(1-u)$ instead of $G$. Clearly, $\tilde G(\eta)$ 
and $G(\eta)$ are equal in distribution, which implies that the limit distributions arising in the vertical and
horizontal case satisfy the same fixed point equation. It is straightforward to set up a metric space of probability
distributions which contains these limit distributions and that turns the right hand side of~\eqref{eq:rec2TPL} into 
a contraction, hence the limit distributions arising for the vertical and horizontal path length of random recursive
trees are identical.

The above argument depends on the limit version of the decomposition. With some additional work
the finite version in Section~\ref{subsec:treedecomp} can be used directly to obtain the
convergence in distribution of the standardized path length; see~\cite{RoeQS} for the 
Quicksort situation. As pointed out at the beginning of this section, the contraction
method may miss the fact that the random variables themselves converge. On the other hand, 
as the above path length example shows,  the approach via a fixed point relation for the limit 
distribution may lead to the direct recognition of the equality of two limit distributions, which 
may not be apparent from the representation of the respective limit random variables in terms
of the limit tree (indeed, the representations $Y_\infty$ and $Y_\infty+Z_\infty$, given in 
Theorems~\ref{thm:TPL} and~\ref{thm:HPL} respectively, seem to suggest that the limit
distributions are different).

\begin{figure}
  \begin{center}
  \includegraphics[width=6cm]{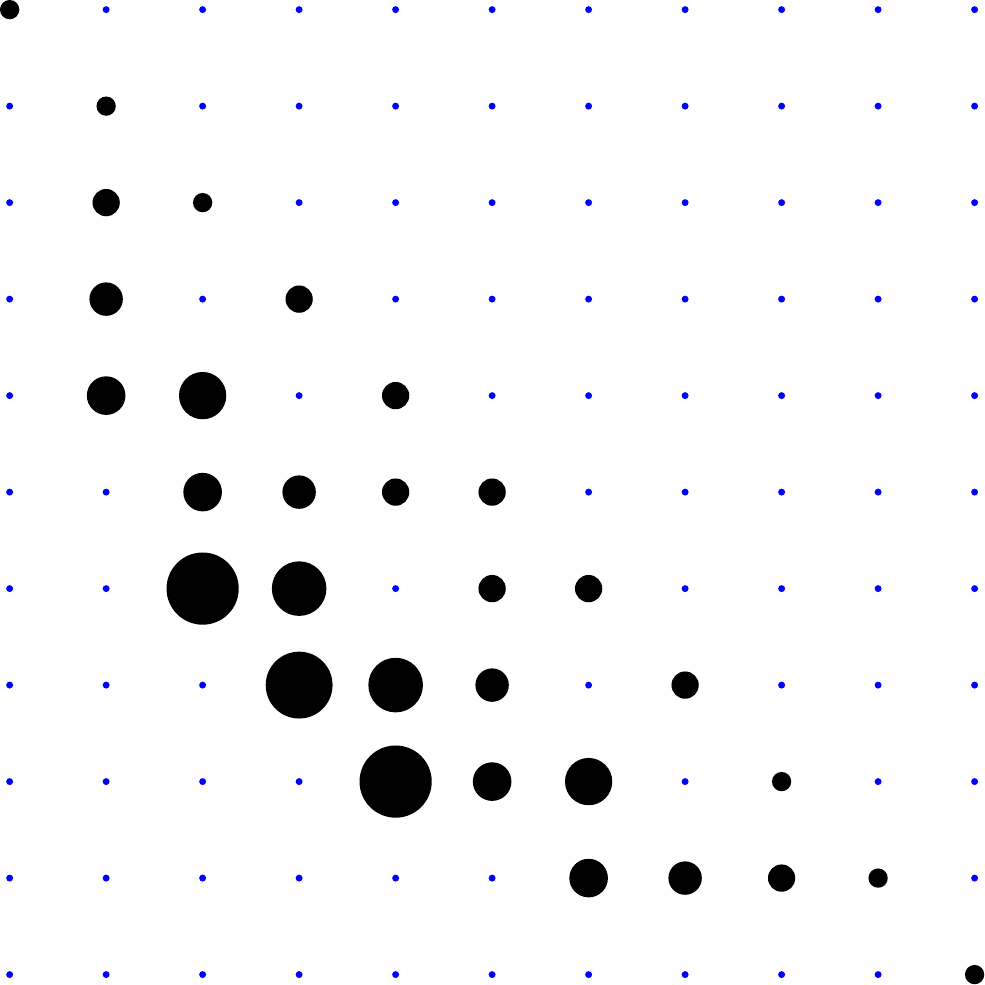}
 \end{center}
\caption{Joint distribution of the vertical and horizontal total path length for the trees in $\bH_7$.}\label{fig:VHTPL}
\end{figure}

Equality of the limit distributions naturally raises the question whether there is a relation between the respective
distributions for finite trees.  Figure~\ref{fig:VHTPL} shows the pair $(i,j)$ of values $i$ for the vertical and $j$
for the horizontal total path length for all $6!=720$ recursive trees with $7$ nodes, where the sizes of the black dots
correspond to the multiplicities of the pairs and the blue dots represent pairs that do not appear. The picture suggests
that, up to a shift that is apparent from~\eqref{eq:meanVertPL}, 
the joint distribution of total vertical and total horizontal path length is symmetric.  Clearly,
this would imply that the limit distributions are the same.

We now define $T:\bV\to\bV\,$ by $T(\emptyset)=\emptyset$, $T((1))=(1)$ and, if $u=(u_1,\ldots,u_k)$ and $T(u)=v$ with
$v=(v_1,\ldots,v_j)$, by
\begin{equation}\label{eq:Tdefrec}
   \begin{split}
  T\bigl((u_1,\ldots,u_k,1)\bigr)\; &:=\; (v_1,\ldots,v_{j-1},v_j+1),\\
  T\bigl((u_1,\ldots,u_{k-1},u_k+1)\bigr)\; &:=\; (v_1,\ldots,v_j,1).
  \end{split}
\end{equation}
It is easy to see that $T$ is bijective; in fact, $T^{-1}=T$ ($T$ can be related to the natural correspondence mentioned
after the proof of Theorem~\ref{thm:lim1}; see~\cite{IMdiss}). The recursive part~\eqref{eq:Tdefrec} translates a move
downwards into a move to the right and vice versa. Further, $T$ is compatible with tree growth: If we add a node $u$ to a tree $x$ as a
first child of $v\in x$, then $T(u)$ is the next next child to the parent of $T(u)$ and, again, vice versa. In
particular, writing $T(x)$ for $\{T(u):\, u\in x\}$, we may lift $T$ to a bijective map on $\bH$ with the property that
$T(\bH_n)=\bH_n$ for all $n\in\bN$. This construction proves
\begin{equation*}
  \cL\bigl(\TPL(X_n)-(n-1)\bigr) \;=\; \cL\bigl(\HPL(X_n)\bigr) \ \text{ for all } n\in\bN, \, n\ge 2,
\end{equation*}
if we can show that the distribution of the Harris chain $(X_n)_{n\in\bN}$ is invariant under $T$ and that
\begin{equation}\label{eq:TPLHPL}
  \HPL\bigl(T(x)\bigr) = \TPL\bigl(x\bigr)-1\ \text{ for all } x\in\bH,\, \#x>1.
\end{equation}
The first of these is an immediate consequence of the tree growth mechanism. To obtain~\eqref{eq:TPLHPL} it is
enough to show that
\begin{equation*}
  |T(u)|_1-|T(u)| = |u|-1\ \text{ for all } u\in\bV,\, u\not=\emptyset.
\end{equation*}
This, however, can easily be proved by induction, considering the two cases in~\eqref{eq:Tdefrec} separately.

In view of this simple bijective proof one may naturally wonder what the advantage of the boundary theory approach
might be.  Almost sure convergence of the standardized vertical and horizontal path lengths implies the convergence of any linear
combinations, for example. This is of interest in connection with the analysis of the recursive tree algorithm $\RT$
introduced in Section~\ref{subsec:alg}: The number $C_n$ of comparisons needed to build the tree $X_n$ for $n-1$ data is given
by the sum of the horizontal and the vertical path length of $X_n$, hence
\begin{equation*}
  EC_n=2nH_n-3n-1, \quad \frac{1}{n}\bigl(C_n-EC_n)\to 2Y_\infty+Z_\infty\ \text{with probability } 1,
\end{equation*}
with $Y_\infty$ and $Z_\infty$ as in Sections~\ref{subsec:TPL} and~\ref{subsec:HTPL}.
While the mean can be obtained from the symmetry and the individual results for the two versions of path length, we
would  need their joint distribution in order to obtain the limit result for the sum.

\bibliographystyle{amsalpha}
\bibliography{rrt}

\end{document}